\newtheorem{lemma}{Lemma}
\newtheorem{theorem}{Theorem}
\newtheorem{definition}{Definition}
\newtheorem{remark}{Remark}
\newtheorem{corollary}{Corollary}
\newcolumntype{Y}{>{\raggedright\arraybackslash}X} 
\renewcommand{\v}[1]{\bm{#1}}
\newcommand{\vu}{{\v u}}
\newcommand{\vn}{{\v n}}
\newcommand{\vf}{{\v f}}
\newcommand{\vx}{{\v x}}
\newcommand{\vk}{{\v k}}
\newcommand{\vy}{{\v y}}
\newcommand{\eps}{\varepsilon}
\newcommand{\prol}{{\psi_0^c}}
\newcommand{\erf}{\operatorname{erf}}
\newcommand{\erfc}{\operatorname{erfc}}
\newcommand{\wfunc}{\phi} 
\newcommand{\wfunchat}{\widehat{\phi}} 
\newcommand{\wgauss}{\phi_G} 
\newcommand{\wgausshat}{\widehat{\phi}_G}
\newcommand{\wprol}{\phi_P} 
\newcommand{\wprolhat}{\widehat{\phi}_P}
\newcommand{\Kmax}{K_{\rm{max}}}
\newcommand{\truncrad}{\mathcal{R}}
\newcommand{\unitbox}{\mathcal{B}}
\newcommand{\Bres}{B_R}
\newcommand{\Breshat}{\widehat{B}_R}
\newcommand{\Bmoll}{B_M}
\newcommand{\Bhat}{\widehat{B}}
\newcommand{\Bmollhat}{\widehat{B}_M}
\newcommand{\Hres}{H_R}
\newcommand{\Hmollhat}{\widehat{H}_M}
\newcommand{\Kres}{{K_R}}
\newcommand{\Kmoll}{{K_M}}
\newcommand{\Kdiff}{{K_D}}
\newcommand{\Kmollhat}{\widehat{K}_M}
\newcommand{\Kdiffhat}{\widehat{K}_D}
\def\kmhat_#1{\widehat{K}_{M_{#1}}}
\def\km_#1{K_{M_{#1}}}
\def\kr_#1{K_{R_{#1}}}
\def\kd_#1{K_{D_{#1}}}
\def\bmohat_#1{\widehat{B}_{M_{#1}}}
\def\brhat_#1{\widehat{B}_{R_{#1}}}
\def\bdhat_#1{\widehat{B}_{D_{#1}}}
\def\bmo_#1{B_{M_{#1}}}
\def\br_#1{B_{R_{#1}}}
\def\bd_#1{B_{D_{#1}}}
\newcommand{\vm}{\mathbf{m}}
\newcommand{\R}{\mathbb{R}}
\newcommand{\tind}{\beta}
\newcommand{\sind}{\alpha}
\newcommand{\xb}{{\v x}_{\beta}}
\newcommand{\ulocal}{u_{\text{local}}}
\newcommand{\ufar}{u_{\text{far}}}
\newcommand{\uself}{u_{\text{self}}}
\newcommand{\RSdiag}{S_{\text{diag}}}
\newcommand{\RSoffd}{S_{\text{offd}}}
\newcommand{\RTdiag}{T_{\text{diag}}}
\newcommand{\RToffd}{T_{\text{offd}}}
\newcommand{\ROoffd}{\Omega_{\text{offd}}}
\newcommand{\MSdiag}{S_{M,\text{diag}}}
\newcommand{\MSoffd}{S_{M,\text{offd}}}
\newcommand{\MTdiag}{T_{M,\text{diag}}}
\newcommand{\MToffd}{T_{M,\text{offd}}}
\newcommand{\FSdiag}{S_{F,\text{diag}}}
\newcommand{\FSoffd}{S_{F,\text{offd}}}
\newcommand{\FTdiag}{T_{F,\text{diag}}}
\newcommand{\FToffd}{T_{F,\text{offd}}}
\newcommand{\wfun}{\gamma} 
\newcommand{\wfunhat}{\widehat{\gamma}} 
\newcommand{\bhkhat}{\widehat{B}}
\newcommand{\vzero}{{\v 0}}
\newcommand{\be}{\begin{equation}}
\newcommand{\ee}{\end{equation}}
\newcommand{\ba}{\begin{aligned}} 
\newcommand{\ea}{\end{aligned}}
\newcommand{\MDU}{Division of Mathematics and Physics, M\"alardalen University, V\"aster{\aa}s, Sweden}
\newcommand{\CCM}{Center for Computational Mathematics, Flatiron Institute, Simons Foundation, New York, NY, 10010, USA}
\newcommand{\COURANT}{Courant Institute of Mathematical Sciences, New York University, New York, New York 10012, USA}
\newcommand{\KTH}{KTH Mathematics, Royal Institute of Technology, SE-100 44 Stockholm, Sweden}
\title{Fast summation of Stokes potentials using a new kernel-splitting in the DMK framework}
\author[1]{Ludvig af Klinteberg}
\author[2,3]{Leslie Greengard}
\author[2]{Shidong Jiang}
\author[4]{Anna-Karin Tornberg}
\affil[1]{\MDU}
\affil[2]{\CCM}
\affil[3]{\COURANT}
\affil[4]{\KTH}
\begin{document}

\maketitle

\begin{abstract}
Classical Ewald methods for Coulomb and Stokes interactions rely on ``kernel-splitting," using 
decompositions based on Gaussians to divide the resulting
potential into a near field and a far field component.
Here, we show that a more efficient splitting for the scalar biharmonic Green's function can be derived using zeroth-order prolate spheroidal wave functions (PSWFs), which in turn yields new efficient splittings for the Stokeslet, stresslet, and elastic kernels, since these Green's tensors can all be derived from the biharmonic kernel. 
This benefits all fast summation methods based on kernel splitting, including FFT-based Ewald summation methods, that are suitable for uniform point distributions, and DMK-based methods that
allow for nonuniform point distributions.
The DMK (dual-space multilevel kernel-splitting)
algorithm we develop here is
fast, adaptive, and linear-scaling,
both in free space and in a periodic cube. We demonstrate its performance with numerical examples in two and three dimensions.
\end{abstract}

\section{Introduction}\label{sec:intro}
The incompressible Stokes equations
\be
\ba
-\Delta \vu + \nabla p &= \vf, \\
\nabla \cdot \vu &= 0
\ea
\ee
model viscous flow when inertial forces can be neglected
in the Navier–Stokes equations. Such ``Stokes flow" arises 
in many contexts, including the
sedimentation of small particles in a viscous fluid, flow through narrow capillaries
or microchannels, lubrication problems, the locomotion of microorganisms, etc.
The fundamental solution -- the Stokeslet -- is 
defined by the formula
\be
S_{jl}(\vx) = (-\delta_{jl}\nabla^2 + \nabla_j\nabla_l) B(\vx)
=\left\{\begin{aligned}& 
\frac{1}{4\pi}\left(-\delta_{jl} \log r + \frac{x_jx_l}{r^2}\right),  & \vx \in \mathbb{R}^2, \\
&\frac{1}{8\pi} \left(\frac{\delta_{jl}}{r} + \frac{x_jx_l}{r^3}\right), & \vx \in \mathbb{R}^3,
\end{aligned}\right.
\label{eq:stokeslet_def_deriv}
\ee
where $\delta_{ij}$ is the Kronecker delta, $r=|\vx|$, $B$ is the biharmonic
Green's function, i.e., $\Delta^2 B(\vx) = \delta(\vx)$, and\footnote{The factor $-3/2$ in the bracket in the $\mathbb{R}^2$-expression is not determined by $\Delta^2 B(\vx) = \delta(\vx)$, but is set so that the differentiations in \eqref{eq:stokeslet_def_deriv} and \eqref{eq:stresslet_def_deriv} hold.}
\be
B(\vx) = \left\{
\begin{aligned}& 
\frac{1}{8\pi}\left(r^2 \log r -\frac{3}{2}r^2\right),  & \vx \in \mathbb{R}^2, \\
&-\frac{r}{8\pi}, & \vx \in \mathbb{R}^3.
\end{aligned}\right.
\label{eq:biharmonic_green_def}
\ee
The associated stresslet $T$ is defined by the formula
\be
T_{jlm}(\vx) =
\left[ -\left(
      \delta_{jl}\nabla_m+\delta_{lm}\nabla_j+\delta_{mj}\nabla_l
    \right) \nabla^2 + 2\nabla_j\nabla_l\nabla_m \right] B(\vx)
=\left\{ \begin{aligned} &-\frac{1}{\pi} \frac{x_jx_lx_m}{r^4}, & \vx \in \mathbb{R}^2, \\
&-\frac{3}{4\pi} \frac{x_jx_lx_m}{r^5}, & \vx \in \mathbb{R}^3.\end{aligned}\right.
\label{eq:stresslet_def_deriv}
\ee
The Stokeslet and stresslet are widely used in 
numerical methods for boundary value problems governed 
by the Stokes equations using either integral equation methods or 
the method of fundamental solutions, and for simulating low Reynolds hydrodynamic interactions in particle suspensions (see, for example, \cite{AfKlinteberg2014a,Brady1988,Broms2025,Jung2006,Lefebvre-Lepot2015,Palsson2020,sorgentone_highly_2018,Wang2016,Wu2020}).

In this paper, we consider the computation of free-space point sums
of the form
\be
u_j(\vx_\tind) = \sum_{\substack{\sind=1\\ \sind \neq \tind}}^N \sum_{l=1}^d 
S_{jl}(\vx_\tind-\vx_\sind) 
{\bf f}_{\sind,l} \quad {\rm or} \quad
u_j(\vx_\tind) = \sum_{\substack{\sind=1\\ \sind \neq \tind}}^N \sum_{l=1}^d \sum_{m=1}^d
T_{jlm}(\vx_\tind-\vx_\sind) 
{\bf f}_{\sind,l} {\bf n}_{\sind,m} 
\label{eq:stokes_stress_sum}
\ee
for $\tind=1,\dots,N$, where ${\bf f}_{\sind,l}$ are the components
of  a force vector and ${\bf n}_{\sind,l}$ are the components of a
prescribed orientation vector (typically the normal to some surface). 
For the sake of simplicity, we assume that the sources and targets lie in the unit box
$\unitbox$ centered at the origin: $\unitbox = [-1/2,1/2]^d$.
Eq.~\eqref{eq:stokes_stress_sum} is a special case of the more general discrete convolution with an arbitrary interaction kernel $K$:
\be
u(\vx_\tind) = \sum_{\substack{\sind=1\\ \sind \neq \tind}}^N K(\vx_\tind-\vx_\sind) \rho_\sind, \quad \tind=1,\dots,N.
\label{eq:fs_sum}
\ee
We will refer to $u$ as a (vector-valued) potential in $\mathbb R^d$.
The source strength $\rho_\sind$
is a vector in $\mathbb R^d$
for the Stokeslet (a rank-two tensor) and a matrix in $\mathbb R^{d \times d}$ 
for the 
stresslet (a rank-three tensor).
We will also consider the periodic version of \eqref{eq:fs_sum}:
\begin{align}
  u(\vx_\tind) = \sum_{\sind=1}^N \sum_{\v p\in\mathbb Z^d}^* K(\vx_\tind-\vx_\sind + \v p) 
  \rho_\sind, \quad \tind=1,\dots,N,
  \label{eq:periodic_sum}
\end{align}
where the $*$ superscript indicates that the term $\v p=0$ is ignored
when $\vx_\tind=\vx_\sind$.

In general, the algorithms presented here will make use of a decomposition of the 
interaction kernel $K$:
\be
K(\v x)=\Kmoll(\v x)+\Kres(\v x),
\label{eq:singlelevel_ks}
\ee
where $\Kmoll(\v x)$ is a {\em mollified} kernel 
which captures the far field correctly and decays rapidly in Fourier space,
and $\Kres(\v x)$ is a localized 
{\em residual} kernel that corresponds to
a near field correction. For periodic electrostatic interaction, 
Ewald \cite{Ewald1921} in 1921 introduced the splitting 
for the three-dimensional Coulomb kernel $H(|\vx|)$:
\begin{align}
    H(r) = \frac{1}{4 \pi r} = \frac{\erf(r/\sigma)}{4 \pi r} + 
\frac{\erfc(r/\sigma)}{4 \pi r}
    := H_M(r) + H_R(r). 
    \label{eq:ewald-split}
\end{align}
where erf and erfc are the error and complementary error functions
\[ \erf(x) = \frac{2}{\sqrt{\pi}} \int_0^x e^{-t^2} \, dt , \qquad 
\erfc(x) = 1 - \erf(x),
\]
and $\sigma$ is a free parameter.
The Fourier transform of $H_M(r)$ is well-known:
\[ 
\widehat{H}_M(k) = \frac{e^{-k^2\sigma^2/4}}{k^2},
\]
so that $\widehat{H}_M(k)$ decays rapidly in Fourier space.
$H_R(r)$, on the other hand, maintains the 
singularity of the Coulomb kernel at the origin but can be truncated in physical space
due to its rapid decay. 

Ewald summation is typically accelerated with the fast Fourier transform (FFT) to compute smooth, long-range interactions with the mollified kernel $\Kmoll$, as described in detail for Stokes kernels
in~\cite{AfKlinteberg2016fse,AfKlinteberg2014a,bagge_fast_2023,Lindbo2010,Palsson2020,Saintillan2005}.  
More precisely, the long-range contribution is evaluated by spreading the sources onto a uniform grid, applying an FFT, then performing an appropriate scaling followed by an inverse FFT, after which the result is interpolated from the uniform grid to the evaluation points.
For readers who are familiar with the nonuniform FFT
(NUFFT)~\cite{Barnett2019finufft,finufftlib,Dutt1993,Dutt1995,potts2004}, one can note that this procedure can be seen as a type-1 transform, followed first by a scaling in Fourier space and then a type-2 transform.
In contrast to the NUFFT, oversampling is not needed in either of the transforms in Ewald summation, due to the mollification in Fourier space
\cite{Darden1993,liang2025arxiv}, although a small amount may be used for improved overall efficiency \cite{nestlerParameterTuningNFFT2016}.

With $O(N)$ grid points and Fourier modes, this is an $O(N \log N)$ procedure. The residual term, confined to nearby neighbors, can usually be summed directly in $O(N)$ time when the sources are more or less uniformly distributed. If the data are highly clustered, however, it is very difficult to balance the long-range mollified part and the local residual part effectively. In this case, the overall cost may be pushed toward $O(N^2)$.

To overcome this, tree-based algorithms such as
the fast multipole method 
(FMM)~\cite{greengard1988,Greengard1987,Tornberg2008,Ying2004} 
and multilevel summation 
\cite{brandt1990jcp,brandt1998,multilevel_summation_2015,tensor_multilevel_ewald} 
were developed over the last few decades.
Both methods are straightforward to apply on adaptive data structures.
In \cite{Ying2004}, a kernel-independent FMM was developed for kernels that
are fundamental solutions of some second-order elliptic partial differential equation.
In \cite{Tornberg2008}, an FMM was developed for the Stokeslet in three
dimensions using four (scalar) FMM calls for the Laplace kernel. Excluding the sorting step,
which usually accounts for a small fraction of the total computational time, FMMs typically require
$O(N)$ work. They rely on the use of {\em translation operators} to compute
well-separated interactions at each level in a tree hierarchy.
More recently \cite{jiang2025cpam},
the dual-space multilevel kernel-splitting (DMK) framework was developed,
unifying multilevel summation and the FMM with the Fourier-based
convolution that underlies Ewald summation. 
DMK relies on a multilevel extension of Ewald splitting:
\be
K(\vx)= \km_0(\vx)
 +\sum_{\ell=0}^{L-1} \kd_\ell(\vx) +\kr_L(\vx)  \qquad (L \geq 1)
\label{eq:multilevel_ks}
\ee
where
\be
\kd_\ell(\v x) =\km_{\ell+1}(\v x) -\km_\ell(\v x).
\label{eq:diffkernel}
\ee
While we postpone a more detailed description of the full numerical method to \cref{sec:fast},
we associate the terms in the telescoping approximation with the refinement level in the tree
hierarchy. In particular,
level $0$ is defined to be the unit cube itself, and level $\ell$ is obtained by
subdividing each box at level $\ell-1$ into $2^d$ child boxes of side length 
$r_{\ell} =2^{-\ell}$.
We will refer to $r_\ell$ as the {\em box length}.
$\km_\ell(\vx)$ in 
\eqref{eq:multilevel_ks}, \eqref{eq:diffkernel} will be the mollified kernel tied to the length 
scale $r_{\ell}$. For scale-invariant kernels, the function $\km_\ell(\vx)$ in \eqref{eq:multilevel_ks} is obtained from $\Kmoll(\vx)$ in \eqref{eq:singlelevel_ks} by scaling both its arguments and its values,
and the {\em difference} kernel $\kd_\ell(\vx)$ is obtained via \eqref{eq:diffkernel}.

By careful construction, as we shall see below, 
the {\em difference} kernel $\kd_\ell(\v x)$ 
can be designed so that it is localized to nearest neighbor
boxes at level $\ell$ in the tree hierarchy.
For readers familiar with the FMM, it is worth noting that
the data flow and bookkeeping tasks are simpler in DMK.
The ``interaction list" is simply the $3^d$ near neighbors at each level rather than the 
$6^d-3^d$  well-separated boxes that permit the use of multipole expansions.
Unlike the FMM, DMK does not make direct use of the governing
PDE. It relies only on separation of variables with careful control of the Fourier expansion length
at each level. For non-oscillatory kernels, these are generally 
of the order $O(\log^d(1/\eps))$.

Since the Stokeslet and stresslet can be derived from the biharmonic Green's function,
we only need to consider the splitting for the biharmonic kernel. In \cite{Hasimoto1959},
Hasimoto proposed a split for the Stokeslet that corresponds to the following split for the three-dimensional biharmonic
kernel
\be
B(r) = \Bmoll(r) + \Bres(r),
\ee
with the residual kernel $\Bres$ expressed in physical space and the mollified
kernel $\Bmoll$ expressed in the Fourier domain: 
\begin{align}
\begin{split}
  \Bres(r) &= -\frac{r}{8\pi} \erfc(r / \sigma) + \frac{\sigma}{8 \pi^{3/2}} e^{-r^2/\sigma^2},
  \\
  \Bmollhat(k) &= \frac{1}{k^4} e^{-k^2 \sigma^2/4} \del{1 + \frac{k^2 \sigma^2}{4} }.
\end{split}
\label{eq:hasimoto-split}
\end{align}
See \cite{AfKlinteberg2016fse,bagge_fast_2023} and the references therein for 
a detailed discussion.

It was shown in \cite{jiang2025cpam} that telescoping kernel approximations
can be developed in either physical or Fourier space. For the Stokeslet and the stresslet,
it will be convenient to develop dimension-independent kernel decomposition in Fourier space.
It was also shown \cite{jiang2025cpam} that replacing Gaussians 
with prolate spheroidal wave functions (PSWFs) leads to more efficient algorithms 
for a variety of scalar interaction kernels including the Coulomb kernel.
The basic reason is that, {\em for the same spatial support},
 the zeroth-order prolate spheroidal wave function
(PSWF)~\cite{slepian1961bstj,landau1961bstj,slepian1978bstj,slepian1983sirev},
denoted by $\psi_0^c$, has a Fourier transform with a bandlimit about half that of 
a Gaussian  (see \cref{sec:prol}) at high precision.

\begin{remark}
The rotlet, the fundamental solution of the Stokes equations corresponding to
a point torque, is given by the formula
\be
\Omega_{jl}(\vx)
=\left\{\begin{aligned}
& -\frac{1}{2} \epsilon_{jl}\nabla_l H(r) = \frac{1}{4\pi} \frac{\epsilon_{jl}x_l}{r^2}, & \vx \in \mathbb{R}^2,\\
& -\frac{1}{2} \epsilon_{jlm}\nabla_m H(r) = \frac{1}{8\pi} \epsilon_{jlm} \frac{x_m}{r^3}, & \vx \in \mathbb{R}^3,
\end{aligned}\right.
\label{eq:rotlet_def_deriv}
\ee
where $H$ is the harmonic Green's function
\be
H(\vx) = \left\{
\begin{aligned} 
&-\frac{1}{2\pi}\log r, & \vx \in \mathbb{R}^2, \\
&\frac{1}{4\pi r}, & \vx \in \mathbb{R}^3.
\end{aligned}\right.
\label{eq:harmonic_green_def}
\ee
As it relies only on the harmonic kernel, it can be treated using the original DMK method
of \cite{jiang2025cpam}.
\end{remark}

\begin{remark}
The elastic kernel, the fundamental solution for the Navier-Cauchy equations
in isotropic linear elasticity:
\be
\mu \Delta \vu+(\lambda+\mu)\nabla(\nabla\cdot\vu)+\vf \delta(\vx)=0,
\ee
is given by the formula
\be
\Gamma_{jl}(\vx) = \left\{\begin{aligned}
&-\frac{1}{4\pi \mu} \left( \alpha \delta_{jl} \log r  + \frac{x_j x_l}{r^2} \right),
& \vx \in \mathbb{R}^2,\\
&\frac{1}{16\pi \mu (1 - \nu)} \left( \frac{\delta_{jl}}{r} + \alpha \frac{x_j x_l}{r^3} \right), & \vx \in \mathbb{R}^3.
\end{aligned}\right.
\ee
Here, $\lambda$ and $\mu$ are Lam{\' e} parameters, $\nu=\frac{\lambda}{2(\lambda+\mu)}$
is Poisson's ratio, $\alpha = \frac{1 - 2\nu}{1 - \nu}$. Since the 
elastic kernel is essentially the same as the Stokeslet (up to constant factors), it can be treated
using the methods developed here with trivial modification.
Finally, we should note that many boundary value problems in linear elasticity
can be formulated directly in terms of a scalar biharmonic potential
(see, for example, \cite{jiang2011jcp}). The rapid evaluation of layer potentials
with kernels which are derivatives of $B(r)$ can also be accomplished using the methods
developed below.
\end{remark}

The main contributions of the present paper are as follows.
\begin{itemize}
\item Since the biharmonic kernel is a PDE kernel, we start from the Fourier transform
  of the biharmonic kernel:
\begin{align}
\widehat B(k) = \frac{1}{k^4}, \qquad k = |\vk|.
\label{eq:biharmonic_ft}
\end{align}
We develop a rigorous analysis for fairly general splittings of the biharmonic kernel.
The analysis provides sufficient conditions for the residual kernel to be
compactly supported in physical space.

\item Based on this analysis, we propose a 
new splitting for the biharmonic kernel in the Fourier domain of the form:
\be
\ba
\Bhat(k) &= \Bmollhat(k) + \Breshat(k),\\
\Bmollhat(k) &= \widehat B(k) \wfunhat(k)
,\\
\Breshat(k) &= \widehat B(k) (1- \wfunhat(k))
\ea
\label{eq:biharmonicks}
\ee
where
\be
\begin{split}
\wfunhat(k) = \wfunchat(k)-k\wfunchat'(k)/2, \qquad \wfunchat(k) = \frac{\psi_0^c(k/c)}{\psi_0^c(0)} ,
\label{eq:bsplitdef}
\end{split}
\ee
and $\psi_0^c$ is the prolate spheroidal wave function of order zero.
This leads directly to more efficient
kernel splittings for the Stokeslet and stresslet \eqref{eq:stokeslet_def_deriv}
and \eqref{eq:stresslet_def_deriv} via differentiation, compared with the existing
Hasimoto splitting~\eqref{eq:hasimoto-split} based on Gaussians.
\item We use the new kernel splitting to accelerate 
  the DMK algorithm in the adaptive setting. (In the uniform setting,
  the single-level splitting $B(\v x)=\Bmoll(\v x)+\Bres(\v x)$
  leads to an improved version of
  fast Ewald summation.)
\item We describe the modifications to the DMK framework
  required to impose periodic boundary conditions on a cubic domain using a level-restricted adaptive tree.
\end{itemize}

The mathematical background of our approach 
is described in \cref{sec:prelim}.
\Cref{sec:biharmonicks} is devoted to a general theory for choosing the
{\em biharmonic mollifier}
$\wfunhat(k)$, above, in the Fourier domain.
We then show, in \cref{sec:prolks}, 
how prolate spheroidal
wave functions (PSWFs) lead to the
highly efficient choice \eqref{eq:bsplitdef}.
In \cref{sec:fast}, we present fast algorithms for the Stokes and stresslet,
especially the DMK algorithm and its modifications for periodic boundary conditions.
Numerical results are presented in 
\cref{sec:results} with concluding remarks in \cref{sec:conclusions}.

\section{Mathematical Preliminaries} \label{sec:prelim}

\subsection{Fourier transforms}
For $\vx, \vk \in \mathbb R^d$, the
Fourier transform pair $f(\vx), \widehat{f}(\vk)$ (under suitable conditions)
is defined by
\begin{align}
  \widehat f(\v k) &= \int_{\mathbb R^d} f(\v x) e^{-i \vk\cdot\vx} \dif \v x, \\
  f(\v x) &= \frac{1}{(2\pi)^d} \int_{\mathbb R^d} \widehat f(\vk) e^{i \vk\cdot\vx} \dif \vk. 
\end{align}
When the context is clear, we will often use the notation $r=|\v x|$ and $k=|\vk|$. For a
radially symmetric function in $\mathbb R^d$ we write
$f(\v x) = f(r)$ and $\widehat f(\vk) = \widehat f(k)$, with Fourier
transform pair
\be
\ba
  \widehat f(k) = 2\pi \int_0^\infty J_0(kr) f(r) r \dif r \quad &\leftrightarrow \quad
  f(r) = \frac{1}{2\pi} \int_0^\infty J_0(kr) \widehat f(k) k \dif k 
\quad {\rm in} \ \ \mathbb R^2 ,\\
  \widehat f(k) = 4\pi \int_0^\infty \frac{\sin(kr)}{kr} f(r) r^2 \dif r \quad &\leftrightarrow
 \quad f(r) = \frac{1}{2\pi^2} \int_0^\infty \frac{\sin(kr)}{kr} \widehat f(k) k^2 \dif k 
\quad {\rm in} \ \ \mathbb R^3,
\ea
\label{eq:radial_ft}
\ee
where $J_0$ is the Bessel function of order zero.

\subsection{Quadrature in the Fourier domain} \label{sec:trapquad}

Let $u(\vx) = \rho(\vx) \ast \Kdiff(\vx)$, where $\rho(\vx)$ is defined as the 
point source distribution 
\[ \rho(\vx) = \sum_{\sind=1}^N \delta(\vx-\vx_\sind) \rho_\sind \, ,
\]
and $\delta(\vx)$ is the usual Dirac $\delta$-function.
From the convolution theorem, we may write
\be
  u(\v x) = \frac{1}{(2\pi)^d} \int_{\mathbb{R}^d} 
\widehat u(\vk) e^{i \vk\cdot\vx} \dif \vk,
\ee
where $\widehat{u}(\vk) = \widehat{\rho}(\vk) \, \Kdiffhat(\vk)$ with
\be
 \widehat{\rho}(\vk_{\vm}) = 
\sum_{\sind=1}^N \rho_\sind e^{i \vk_{\vm} \cdot \vx_\sind} .
\ee
Suppose now that $\Kmax$ has been chosen so that
\be
  u(\v x) = \frac{1}{(2\pi)^d} \int_{[-\Kmax,\Kmax]^d} 
\widehat u(\vk) e^{i \vk\cdot\vx} \dif \vk
+ O(\epsilon)
\label{truncft}
\ee
with $\|\vx\|_\infty\leq 1/2$.
That is, $u(\vx)$ is bandlimited to precision $\epsilon$ and restricted to the unit cube.
Since $\widehat{\rho}(\vk)$ is not decaying as $|\vk| \rightarrow \infty$ the bandlimit
is determined by the behavior of $\Kdiffhat$. Given $\Kmax$ we would like to know
how many quadrature points are needed in order to compute
\eqref{truncft} to precision $\epsilon$. Assuming $\Kdiffhat$ itself is nonoscillatory,
it is the range of $\vx$ that determines Nyquist sampling of the integrand. 
More precisely, the number of oscillations of the integrand over the range 
$[-\Kmax,\Kmax]$ is $O(x_{max} \cdot \Kmax)$ where $x_{max} = \|\vx \|_\infty$.
Then, by choosing $n \approx x_{max} \cdot \Kmax$, 
with $\vm = (m_1,...,m_d)$,
$h = \frac{\Kmax}{n}$, and $\vk_{\vm} = h \vm$, it is well-known 
\cite{Trefethen2014} that as $\Kdiffhat$ is smooth the trapezoidal approximation
\be
  u(\v x_\tind) \approx \left( \frac{h}{2\pi}  \right)^d \sum_{\vm \in [-n,n]^d} 
\widehat{\rho}(\vk) \, \Kdiffhat(\vk)
e^{i \vk_{\vm}\cdot\vx_\tind}, \quad \tind = 1,\dots,N, 
\label{ewalds3}
\ee
is spectrally accurate,
with an error that decays superalgebraically as $n$ increases beyond 
$x_{max} \cdot \Kmax$.

\begin{remark} \label{rmk:scaling}
Below, we will construct Fourier signatures $\Kdiffhat(\vk)$ with a 
bandlimit of the order $O(1)$ to precision $\epsilon$ in both physical and Fourier space.
We will use those signature functions in a multilevel fashion such that, at level $\ell$
we rescale the Fourier signature to be $\Kdiffhat(\vk r_\ell)$ while 
we simultaneously reduce the spatial scale to be $O(r_\ell)$. In this manner, the product
$x_{max} \cdot \Kmax$ will remain {\em constant} and the number of points needed 
can be shown to be of the order $O( \log^d(1/\epsilon))$. 
See sections \ref{sec:biharmonicks} and \ref{sec:par_selection} below, and
\cite{jiang2025cpam} for a more detailed discussion.
\end{remark}

\subsection{Truncated biharmonic kernels}
One of the difficulties with the kernels of classical physics, including the biharmonic
kernel, is that they have singularities in the Fourier domain. In the biharmonic case,
the singularity $(1/k^4)$ lies at the origin.
Since we are assuming the sources and targets lie within the unit cube $\unitbox$,
it is clear that multiplying the kernel by the function
${\rm rect} \left( \frac{r}{\truncrad} \right)$ has no effect on interactions within $\unitbox$, where
$\truncrad = (1+\sqrt{d})$ and
\begin{align}
  \operatorname{rect}(x) =
  \begin{cases}
    1 & \text{for } |x| \le 1 \\
    0 & \text{for } |x| > 1.
  \end{cases}
\end{align}
In \cite{Vico2016}, it was shown that the Fourier transforms of these ``truncated"
kernels can be computed in closed form and are infinitely 
differentiable, so that the trapezoidal rule can be applied in the Fourier domain with 
high order accuracy as discussed in \cref{sec:trapquad}.
The truncated biharmonic
kernels derived in \cite{Vico2016}, however, decay at the rates $O(k^{-1})$ and $O(k^{-2})$ in 2D/3D rather than at the rate
$O(k^{-4})$ of the original biharmonic kernel, making the error in truncating the Fourier
transform suboptimal.
In \cite[Appendix~D]{bagge_fast_2023}, exploiting the non-uniqueness of the Green's function,
the authors proposed alternate biharmonic kernels (with truncation) of the form
\be
B^\truncrad(r) = \left\{\begin{aligned} &\frac{1}{8\pi}
\left(r^2\log r -\frac{r^2}{2} -r^2\log\truncrad +\frac{\truncrad^2}{2}\right)
\operatorname{rect}\del{\frac{r}{\truncrad}}, \qquad d=2,\\
&-\frac{1}{8\pi}\del{r - \frac{\truncrad}{2} -\frac{1}{2\truncrad} r^2}
\operatorname{rect}\del{\frac{r}{\truncrad}}, \qquad d=3.
\end{aligned}\right.
\label{eq:Btrunc}
\ee
The corresponding Fourier transform $\widehat B^\truncrad$ is given by
\be
\widehat B^\truncrad(k) =\left\{\begin{aligned}&\frac{1}{k^4}
\left(1 -J_0(\truncrad k)-\frac{1}{2}\truncrad k J_1(\truncrad k)\right), \qquad d=2, \\
&\frac{1}{k^4}\del{1 + \frac{1}{2}\cos(k\truncrad) - \frac{3}{2}\frac{\sin(k\truncrad)}{k\truncrad}}, \qquad d=3,
\end{aligned}\right.
\label{eq:Btrunc_hat}
\ee
where $J_n$ is the Bessel function of order $n$.  Note that $\widehat
B^\truncrad$ decays at the rates $O(k^{-3})$ and $O(k^{-4})$.  Note also that some care
must be taken in using the differential relation for the Stokeslet in
\eqref{eq:stokeslet_def_deriv}, since using $B^\truncrad(r)$ in place
of $B(r)$ results in an additional constant term which must be
corrected for \cite{bagge_fast_2023}, see \cref{sec:totsum}.

\subsection{Prolate spheroidal wave functions} \label{sec:prol}

The prolate spheroidal wave functions (PSWFs) of the first kind
are eigenfunctions of the Fourier integral operator
\be
\mathcal{F}[\sigma](x) = \int_{-1}^1 \sigma(t) e^{ictx} \dif t
\ee
for a parameter $c>0$. The PSWF of order $0$, denoted $\psi_0^c$,
is the eigenfunction 
corresponding to the largest eigenvalue $\lambda_0^c>0$.
It is well-known that  $\psi_0^c$ is an even function. PSWFs (or {\em prolates})
were studied extensively in a seminal sequence of papers
\cite{slepian1961bstj,landau1961bstj,slepian1978bstj,slepian1983sirev}.
They are important in the
context of kernel-splitting due to the following optimality property:
for all bandlimited functions 
with band limit $c$ 
defined by the formula 
\begin{align}
    f(x)=\int_{-1}^1 \sigma(t) e^{icxt} \dif t
\end{align}
for some $\sigma \in L^2[-1,1]$ with
\begin{align}
    \int_{-1}^1 |\sigma(t)|^2 \dif t=1,
\end{align}
we have~\cite[Thm.~3.53]{osipov_prolate_2013} 
\begin{align}
    \norm{f}_{L^2[-1,1]} = \int_{-1}^1 |f(x)|^2 \dif x \le |\lambda_0^c|^2,
    \label{eq:L2sq_f}
\end{align}
and \eqref{eq:L2sq_f} holds with equality only if $\sigma=\prol$, i.e.,
\be
\lambda_0^c \prol(x) = \int_{-1}^1 \prol(t) e^{ictx} \dif t.
\ee
Thus,
\begin{align}
  \widehat\psi_0^c(k) 
       = \lambda_0^c \prol(k/c), \quad |k| \le c .
         \label{eq:PSWFhat}
\end{align}
Setting $x=0$ on both sides of the above equation, we obtain
\be
\lambda_0^c \prol(0) = \int_{-1}^1\prol(t) \dif t.
\ee
Let us now compare the optimized window functions $\wprolhat^c$ and the Gaussian
$\wgausshat$:
\begin{align}
  \wprolhat^c(k) &= \frac{\prol(k/c)}{\prol(0)},
  &
    \wgausshat(k) &= e^{-k^2 \sigma^2/4},
 \label{eq:wfunchat_prol_G}
  \\
  \wprol^c(r) &= \frac{\prol(r)}{\int_{-1}^1 \prol(t)\dif t} ,
  &
    \wgauss(r) &= \frac{1}{\sigma\sqrt{\pi}}e^{-r^2/\sigma^2}, 
    &
    \label{eq:wfunc_prol_G}
\end{align}
assuming that the window functions are truncated at $r=1$ in real space
with an error defined to be the absolute value of the window at the point of truncation.
This error is controlled by the parameters $c$ and $\sigma$ which, in turn,
determine how many Fourier modes are required. For 10 digits of accuracy, $c=32$ is sufficient
in the case of the prolate window and $\sigma = 0.2$ is sufficient in the case of the 
Gaussian. Once $c$ and $\sigma$ are known, it remains to determine how many Fourier
modes are needed in each case.  For $\wprolhat^c$, the Fourier integral
is truncated (by definition) at $K_P=c$. For the Gaussian, the desired accuracy is achieved
at $K_G= \frac{2}{\sigma^2} \approx 50$.
That is, the number of modes needed to discretize the $d$-dimensional Fourier transform of the 
prolate window in each dimension is roughly $60 \%$ of that required for the
Gaussian.  For $d=3$, this results in a 4-fold reduction in the number of Fourier modes needed. 
\Cref{fig:split_compare}, modified from \cite[fig.~7]{jiang2025cpam}, compares
the stresslet splitting kernel \eqref{eq:T-offd} when based on either the prolate window or 
the Gaussian.
The number of Fourier modes needed is reduced by a factor $0.63^d$. 
\begin{figure}[ht]
  \centering
  \includegraphics[width=0.31\textwidth]{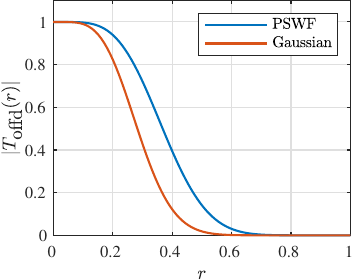}
  \hfill
  \includegraphics[width=0.31\textwidth]{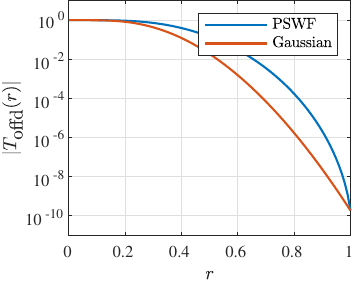}
  \hfill
  \includegraphics[width=0.31\textwidth]{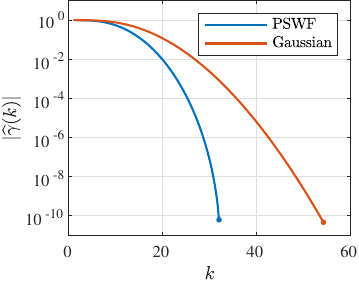}
  \caption{Efficiency of stresslet splitting using either the prolate window 
    ($\wprol$) or the Gaussian ($\wgauss$). The magnitude of the dominant term
    $\RToffd(r)$ in \eqref{eq:T-offd}
    is shown in the left and center plots in real space on a linear and $\log$ scale,
    respectively. The corresponding mollifier $\widehat\gamma(k)$ is plotted
    in the Fourier domain (see \cref{def:wfunhat}) on the right.
    For the same decay at $r=1$, we require 
    $c=32$ for $\wprol$) and $\sigma=0.192$ for $\wgauss$).
    The parameter choices and error analysis are described in more detail 
    in \cref{sec:par_selection} for all the relevant tensor kernels at
    various accuracy levels.}
  \label{fig:split_compare}
\end{figure}

\section{General kernel splitting for the biharmonic kernel}
\label{sec:biharmonicks}

We now develop a rather general (multilevel) splitting for the biharmonic kernel.
As observed in \cref{sec:intro}, we will only need to consider
the single-level splitting for the Fourier transform of the biharmonic
kernel~\eqref{eq:biharmonic_ft} of the form:
\be
\bhkhat(k) = \Bmollhat(k) + \Breshat(k).
\label{eq:bh_kernelsplit}
\ee

\begin{definition} \label{def:wfunhat}
  A function $\wfun(r)$ is called a {\em biharmonic mollifier} if it is
  an even, radially symmetric, 
  and its Fourier transform 
  $\wfunhat$ satisfies the following two properties:
  \be
  \ba
  \wfunhat(0) &= 1,\\
  \wfunhat''(0) & = 0.
  \ea\label{eq:wfunproperties}
  \ee
\end{definition}

\begin{lemma}
  Suppose that $\wfun$ is a biharmonic mollifier. Then $\wfunhat$ is 
  even and radially symmetric.
  Furthermore, $\wfun$ itself satisfies the following two properties:
  \begin{align}
    \int_{\mathbb{R}^d} \wfun(|\vx|) d\vx &= 1, \label{eq:wfun1}\\
    \int_{\mathbb{R}^d} \wfun(|\vx|) |\vx|^2 d\vx &= 0. \label{eq:wfun0}
  \end{align}
\end{lemma}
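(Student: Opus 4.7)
The plan is to unpack the two conditions on $\wfunhat$ at the origin and translate them into moment conditions on $\wfun$ via the standard Fourier-multiplier/moment correspondence. I would first dispose of the symmetry statement, then prove the zeroth-moment identity \eqref{eq:wfun1} from $\wfunhat(0)=1$, and finally obtain the vanishing second-moment identity \eqref{eq:wfun0} from $\wfunhat''(0)=0$ by interpreting the Laplacian of $\wfunhat$ at the origin.

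First I would note that the Fourier transform of an even, radial function in $\R^d$ is again even and radial: indeed, the change of variables $\vx\mapsto -\vx$ in the defining integral for $\wfunhat$ gives $\wfunhat(-\vk)=\wfunhat(\vk)$, and for any rotation $Q\in O(d)$ the change of variables $\vx\mapsto Q^{-1}\vx$ together with $\wfun(Q^{-1}\vx)=\wfun(\vx)$ shows $\wfunhat(Q\vk)=\wfunhat(\vk)$. In particular we may unambiguously write $\wfunhat(\vk)=g(|\vk|)$ where $g$ is smooth at the origin (assuming sufficient decay of $\wfun$ so that the Fourier integrals converge absolutely).

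For \eqref{eq:wfun1}, I would simply evaluate the defining integral of $\wfunhat$ at $\vk=\vzero$:
\be
1=\wfunhat(0)=\int_{\R^d}\wfun(|\vx|)\,e^{-i\vzero\cdot\vx}\,d\vx=\int_{\R^d}\wfun(|\vx|)\,d\vx.
\ee
For \eqref{eq:wfun0}, I would differentiate under the integral sign twice in $\vk$: for each coordinate $j$,
\be
\partial_{k_j}^2\wfunhat(\vk)=-\int_{\R^d}x_j^{\,2}\,\wfun(|\vx|)\,e^{-i\vk\cdot\vx}\,d\vx,
\ee
so summing over $j$ gives $\Delta_\vk\wfunhat(\vk)=-\int_{\R^d}|\vx|^2\wfun(|\vx|)e^{-i\vk\cdot\vx}d\vx$. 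Evaluating at $\vk=\vzero$,
\be
\int_{\R^d}\wfun(|\vx|)|\vx|^2\,d\vx=-\Delta_\vk\wfunhat(\vzero).
\ee
It remains to show that $\Delta_\vk\wfunhat(\vzero)=0$ using the hypothesis $\wfunhat''(0)=0$. Writing $\wfunhat(\vk)=g(k)$ with $k=|\vk|$, the radial form of the Laplacian is $\Delta_\vk\wfunhat=g''(k)+\frac{d-1}{k}g'(k)$. Because $g$ is smooth and even at $0$ we have $g'(0)=0$, and by Taylor expansion $g'(k)/k\to g''(0)$ as $k\to 0$. Hence $\Delta_\vk\wfunhat(\vzero)=g''(0)+(d-1)g''(0)=d\,\wfunhat''(0)=0$, which yields \eqref{eq:wfun0}.

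The only real technical point is the justification of differentiation under the integral sign, which requires integrability of $|\vx|^2\wfun(|\vx|)$; this is mild and will be satisfied in all our intended applications (indeed, our biharmonic mollifiers will be compactly supported or rapidly decaying in physical space). Everything else is an elementary manipulation of the Fourier integral together with the radial form of the Laplacian.
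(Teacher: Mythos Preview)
Your proof is correct and follows essentially the same approach as the paper: both establish \eqref{eq:wfun1} by evaluating $\wfunhat$ at the origin, and both obtain \eqref{eq:wfun0} by identifying $\Delta_\vk\wfunhat(\vzero)$ with $-\int|\vx|^2\wfun(|\vx|)\,d\vx$ on one hand and with $d\,\wfunhat''(0)=0$ via the radial Laplacian formula on the other. Your version is slightly more explicit about the symmetry argument and the differentiation under the integral sign, but the logic is the same.
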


\begin{proof}
  The fact that $\wfunhat$ is even and radially symmetric
  follows from well-known properties
  of the Fourier transform.
  \eqref{eq:wfun1} follows from the fact that
  \be
  \wfunhat(0) = \int_{\mathbb{R}^d} \wfun(|\vx|) d\vx = 1.
  \ee
  Since $\wfunhat$ is even, we have
  \be
  \wfunhat(k) = \wfunhat(0) + \frac{1}{2}\wfunhat''(0)k^2 + O(k^4)
  \ee
  for $k$ near the origin, and
  \be
  \wfunhat'(k) = \wfunhat''(0) k + O(k^3),
  \ee
  which leads to
  \be
  \wfunhat'(k)/k = \wfunhat''(0) + O(k^2).
  \ee
  Using the fact that $\wfunhat$ is radially symmetric, we obtain
  \be
  \Delta \wfunhat(k) = \wfunhat''(k)+\frac{d-1}{k}\wfunhat'(k).
  \ee
  Thus,
  \be
  \Delta \wfunhat(0) = d\wfunhat''(0) = 0,
  \ee
  and \eqref{eq:wfun0} follows from
  \be
  \Delta\wfunhat(0) = -\int_{\mathbb{R}^d} \wfun(|\vx|) |\vx|^2d\vx = 0.
  \ee
\end{proof}

\begin{definition}
  Suppose that $\wfun$ is a biharmonic mollifier. Then a multilevel
  splitting of the biharmonic kernel in the Fourier domain
  is given by the formula
\be
  \bhkhat(k) = 
  \bmohat_0(k) + \sum_{\ell=0}^{L-1} \bdhat_\ell(k) + \brhat_\ell(k),
  \label{eq:generalbiharks}
\ee
where
  \be
  \ba
  \bmohat_\ell(k) &= \wfunhat(k r_\ell) \Bhat(k)= \frac{\wfunhat(k r_\ell)}{k^4}, \\
  \brhat_\ell(k) &= \frac{1 - \wfunhat(k r_\ell)}{k^4}, \\
  \bdhat_\ell(k) &= \bmohat_{\ell+1}(k) -\bmohat_\ell(k),
  \ea\label{eq:generalbiharkdefs}
  \ee
and $r_\ell = 2^{-\ell}$ is the box length at level $\ell$ in the tree hierarchy.
\end{definition}

It is clear that the single-level kernel splitting for the biharmonic kernel
\be
B(r) = \Bmoll(r) + \Bres(r),
\label{eq:bh_kernelsplit2}
\ee
and its multilevel splitting in physical space 
\be
B(r)= \bmo_0(r)
 +\sum_{\ell=0}^{L-1} \bd_\ell(r) +\br_L(r)  \qquad (L \geq 1)
\label{eq:bh_multilevel_ks}
\ee
with the biharmonic \emph{difference} kernel defined by
\be
\bd_\ell(r) =\bmo_{\ell+1}(r) -\bmo_\ell(r).
\label{eq:bh_diffkernel}
\ee
follow from the application of the radial Fourier transform in Eq.~\eqref{eq:radial_ft}.
Note that \cref{def:wfunhat} implies boundedness of $\brhat_\ell(k)$ in the limit $k\to 0$, meaning that $\br_\ell$ is rapidly decaying or compactly supported.

\begin{lemma}\label{lemma:biharmeanvalueproperty}
  Suppose that $u$ is biharmonic in $\mathbb{R}^d$, i.e., $\Delta^2 u(\vx) = 0$.
  Let $\overline{u}(\vx,a)$ be the spherical mean of $u$ defined by the formula
  \be
  \overline{u}(\vx,a) = \frac{1}{\omega_{d-1} a^{d-1}}\int_{S_a(\vzero)} 
          u(\vx-\vy)dS(\vy),
  \ee
  where $S_a(\vzero)$ is the sphere of radius $a$ centered at $\vzero$
  and $\omega_{d-1}$ is the area of the unit sphere $S^{d-1}$ in $\mathbb{R}^d$.
  Then for $d\ge 2$,
  \be
  \overline{u}(\vx,a) = u(\vx) + \frac{a^2}{2d}\Delta u(\vx).
  \label{eq:biharmeanvalueproperty}
  \ee
\end{lemma}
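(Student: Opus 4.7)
The plan is to reduce the statement to a second-order radial ODE in $a$. Fixing $\vx$, I treat $M(a) := \overline{u}(\vx,a)$ as a function of the radius alone and aim to show $M(a) = u(\vx) + \frac{a^2}{2d}\Delta u(\vx)$, which is exactly \eqref{eq:biharmeanvalueproperty}.

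The first step is to establish Darboux's equation for spherical means,
\be
M''(a) + \frac{d-1}{a}\, M'(a) = \overline{\Delta u}(\vx,a).
\ee
Parametrising the sphere by $\vy = a\vn$ with $\vn \in S^{d-1}$, one differentiates $M(a)$ under the integral to get $M'(a) = -\frac{1}{\omega_{d-1}}\int_{S^{d-1}} \vn\cdot(\nabla u)(\vx-a\vn)\, dS(\vn)$, and a divergence-theorem argument on the ball of radius $a$ rewrites this as $a^{d-1}M'(a) = \frac{1}{\omega_{d-1}}\int_{|\vy|<a} \Delta u(\vx-\vy)\, d\vy$. Differentiating once more in $a$ and dividing by $a^{d-1}$ yields Darboux's identity. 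Smoothness of $u$ justifies all exchanges of integration and differentiation.

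Next I invoke the hypothesis $\Delta^2 u = 0$: the function $v := \Delta u$ is harmonic on $\mathbb{R}^d$, so the classical harmonic mean value property gives $\overline{v}(\vx,a) = v(\vx) = \Delta u(\vx)$ for every $a>0$. Darboux's equation then reduces to the inhomogeneous linear ODE
\be
M''(a) + \frac{d-1}{a}\, M'(a) = \Delta u(\vx),
\ee
whose right-hand side is constant in $a$. A particular solution is $\frac{a^2}{2d}\Delta u(\vx)$ (direct verification: both $(a^2/(2d))''$ and $(d-1)/a$ times $(a^2/(2d))'$ sum to $1$, multiplied by the constant $\Delta u(\vx)$). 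The homogeneous equation $M''+(d-1)M'/a=0$ has solutions spanned by $1$ and $a^{2-d}$ for $d\ge 3$, and by $1$ and $\log a$ for $d=2$. Since $u$ is smooth, $M(a)$ is bounded as $a\to 0^+$ with $\lim_{a\to 0^+}M(a) = u(\vx)$; this forces the coefficient of the singular homogeneous mode to vanish and the constant term to equal $u(\vx)$. Combining, $M(a) = u(\vx) + \frac{a^2}{2d}\Delta u(\vx)$, as claimed.

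The main obstacle is the careful derivation of Darboux's equation: one must keep track of the correct $a$-dependence in the change from the sphere $S_a(\vzero)$ in the statement to the unit sphere $S^{d-1}$, and verify that the divergence-theorem rewriting produces precisely the $(d-1)/a$ prefactor. Once this identity is in hand, the reduction via biharmonicity and the uniqueness argument for the bounded ODE solution are essentially algebraic.
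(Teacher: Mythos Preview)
Your proof is correct and follows essentially the same route as the paper: derive Darboux's equation for the spherical mean via the divergence theorem, use harmonicity of $\Delta u$ to reduce the right-hand side to the constant $\Delta u(\vx)$, and then solve the resulting second-order radial ODE, using the behavior at $a\to 0^+$ to eliminate the singular homogeneous mode and fix the constant. The only cosmetic difference is that the paper integrates the ODE directly in self-adjoint form $(a^{d-1}\overline{u}')' = a^{d-1}\Delta u(\vx)$ and uses $\overline{u}'(0)=0$, whereas you identify the particular and homogeneous solutions separately and invoke boundedness of $M(a)$.
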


\begin{proof}
  First, we have
  \be
  \ba
  \partial_a \overline{u}
  &= \frac{1}{\omega_{d-1}}\int_{S_1(\vzero)} u(\vx+a\vy)dS(\vy)\\
  &= \frac{1}{\omega_{d-1}}\int_{S_1(\vzero)} \vy\cdot \nabla u(\vx+a\vy)dS(\vy)\\
  &= \frac{1}{\omega_{d-1}a^{d-1}}\int_{S_a(\vzero)} \vn(\vy)\cdot \nabla u(\vx+\vy)dS(\vy)\\
  &= \frac{1}{\omega_{d-1}a^{d-1}}\int_{B_a(\vzero)} \Delta u(\vx+\vy)d\vy\\
  \ea\label{eq3.1}
  \ee
  where $B_a(\vzero)$ is the ball of radius $a$ centered at $\vzero$ and
  spherical symmetry is used to change $\vx-a\vy$ to $\vx+a\vy$ in the
  first equality. The last equality follows from the divergence theorem.
  Second, we have
  \be
  \partial_a^2 \overline{u}=-\frac{d-1}{a}\partial_a \overline{u} +
  \frac{1}{\omega_{d-1}a^{d-1}}\int_{S_a(\vzero)} \Delta u(\vx+\vy)d S(\vy),
  \label{eq3.2}
  \ee
  where the first term on the right side follows from the derivative
  of $1/a^{d-1}$, and the second term follows from the fact that the derivative
  of the integral over the ball $B_a$ is the integral over the sphere $S_a$.
  Thus,
  \be
  \partial_a^2 \overline{u}+\frac{d-1}{a}\partial_a \overline{u} =
  \frac{1}{\omega_{d-1}a^{d-1}}\int_{S_a(\vzero)} \Delta u(\vx+\vy)d S(\vy)
  = \Delta u(\vx),
  \label{eq3.3}
  \ee
  where the second equality follows from the fact that $\Delta u$ is harmonic
  and satisfies the mean value property.
  If we view $\overline{u}$ as a function of $a$ alone, 
then \eqref{eq3.3} is a second order
  ODE for $\overline{u}$ and $\Delta u(\vx)$ is a constant:
  \be
  (a^{d-1}\overline{u}')' = a^{d-1}\Delta u(\vx).
  \label{eq3.4}
  \ee
  Integrating once, and with a slight abuse of notation, we obtain
  \be
  \overline{u}'(a) = \frac{1}{d}a\Delta u(\vx) + C\frac{1}{a^{d-1}},
  \label{eq3.5}
  \ee
  where $C$ is a constant with respect to $a$.
  From \eqref{eq3.1}, we have
  \be
  \overline{u}'(0) = 0,
  \label{3.6}
  \ee
  since the volume of $B_a(\vzero)$ is $\omega_{d-1}a^d/d$.
  As $1/a^{d-1}$ blows up as $a\rightarrow 0$ for $d\ge 2$, $C$ in \eqref{eq3.5}
  must be zero.
  Thus,
  \be
  \overline{u}'(a) = \frac{1}{d}a\Delta u(\vx).
  \label{eq3.7}
  \ee
  Integrating the above equation with respect to $a$ leads to
  \be
  \overline{u}(a) = \frac{a^2}{2d}\Delta u(\vx) + C.
  \ee
  By the definition of $\overline{u}$, we have
  \be
  C = \overline{u}(0) = u(\vx).
  \label{eq3.8}
  \ee
  The result \eqref{eq:biharmeanvalueproperty} follows.
\end{proof}

\begin{theorem}\label{thm:biharksproperty}
  Suppose that the biharmonic mollifier $\wfun$ is compactly supported, i.e.,
  $\wfun(r)=0$ for $r>1$. Then
  the mollified kernel $\Bmoll$ defined in~\eqref{eq:bh_kernelsplit2}
  satisfies the property
  \be
  \Bmoll(r) = B(r), \qquad r>1.
  \label{eq:mollkernelproperty}
  \ee
\end{theorem}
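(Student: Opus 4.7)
The plan is to prove the identity in physical space, where the Fourier-domain definition in \eqref{eq:bh_kernelsplit} and \eqref{eq:generalbiharkdefs} corresponds (by the convolution theorem) to
\begin{equation*}
\Bmoll(\vx) = (B \ast \wfun)(\vx) = \int_{\mathbb{R}^d} B(\vx-\vy)\,\wfun(|\vy|)\,d\vy.
\end{equation*}
Since $\wfun$ is supported in the closed unit ball, the integration is effectively over $|\vy| \le 1$, and for $r = |\vx| > 1$ we have $|\vx-\vy| \ge r-1 > 0$ throughout. Hence the integrand is a classical biharmonic function of $\vy$ on an open neighborhood of the support of $\wfun$.

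Exploiting the radial symmetry of $\wfun$, I would next foliate the integral by spheres centered at the origin:
\begin{equation*}
\Bmoll(\vx) = \int_0^1 \omega_{d-1}\,a^{d-1}\,\wfun(a)\,\overline{u}(\vx,a)\,da,
\end{equation*}
where $u := B$ and $\overline{u}(\vx,a)$ is the spherical mean of \cref{lemma:biharmeanvalueproperty}. For each $a \in [0,1]$ the closed ball of radius $a$ about $\vx$ misses the origin (since $r > 1 \ge a$), so the hypotheses of the lemma are satisfied and
\begin{equation*}
\overline{u}(\vx,a) = B(\vx) + \frac{a^2}{2d}\,\Delta B(\vx).
\end{equation*}
Substituting this back and recognising the two resulting radial integrals as volume integrals over $\mathbb{R}^d$ yields
\begin{equation*}
\Bmoll(\vx) = B(\vx)\int_{\mathbb{R}^d} \wfun(|\vy|)\,d\vy + \frac{\Delta B(\vx)}{2d}\int_{\mathbb{R}^d} \wfun(|\vy|)\,|\vy|^2\,d\vy.
\end{equation*}
The defining moment conditions \eqref{eq:wfun1} and \eqref{eq:wfun0} of a biharmonic mollifier then collapse the right-hand side to $B(\vx)$.

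The only real obstacle is conceptual: recognising that the two conditions $\wfunhat(0)=1$ and $\wfunhat''(0)=0$ in \cref{def:wfunhat} are exactly what is needed to retain the $B(\vx)$ term with unit coefficient and to annihilate the $\Delta B(\vx)$ correction produced by the biharmonic mean-value identity. The remaining ingredients are routine: interpreting the Fourier splitting as a physical-space convolution (justified because $B$ is explicitly given as a tempered distribution and $\wfun$ is compactly supported), and verifying the geometric inequality $r > a$ that makes \cref{lemma:biharmeanvalueproperty} applicable --- both immediate from the hypotheses.
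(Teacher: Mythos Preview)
Your proposal is correct and follows essentially the same route as the paper: write $\Bmoll$ as the convolution $B\ast\wfun$, foliate by spheres, apply \cref{lemma:biharmeanvalueproperty}, and invoke the moment conditions \eqref{eq:wfun1}--\eqref{eq:wfun0}. The paper's proof is organized identically, so there is nothing substantive to add.
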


\begin{proof}
  By the convolution theorem, we have
  \be
  \ba
  \Bmoll(|\v x|) &= (\wfun * B) (\v x) = \int \wfun(|\vy|) B(|\vx -\vy|) d\vy\\
  &= \int_{B_1(\vzero)} \wfun(|\vy|) B(|\vx-\vy|)d\vy\\
  &= \int_0^{1} \wfun(s) \int_{S_s(\vzero)} B(|\vx-\vy|) dS(\vy) ds,
  \ea\label{eq3.9}
  \ee
  where the third equality follows from the assumption $\wfun(r)=0$ for $r>1$,
  and the fourth equality follows from the decomposition of a ball into spherical
  shells and the fact that $\wfun$ is radial.

  Note that  \eqref{eq:biharmeanvalueproperty} holds whenever $u$ is biharmonic
  for the target $\vx$ outside $B_1(\vzero)$, which is true for the biharmonic Green's
  function $B$.
  Thus, we have
  \be
  \frac{1}{\omega_{d-1} s^{d-1}}\int_{S_s(\vzero)} B(|\vx-\vy|)dS(\vy)
  = B(|\vx|) + \frac{s^2}{2d}\Delta B(|\vx|).
  \label{eq3.10}
  \ee

  Substituting \eqref{eq3.10} into \eqref{eq3.9}, we obtain
  \be
  \ba
  \Bmoll(|\v x|) 
  &= B(|\vx|) \int_0^{1} \wfun(s)\omega_{d-1} s^{d-1}ds
  + \frac{1}{2d}\Delta B(|\vx|) \int_0^{1} \wfun(s)\omega_{d-1} s^{d+1}ds\\
  &= B(|\vx|) \int_{\mathbb{R}^d} \wfun(|\vy|)d\vy
  + \frac{1}{2d}\Delta B(|\vx|) \int_{\mathbb{R}^d} \wfun(|\vy|)|\vy|^2d\vy\\
  &= B(|\vx|),
  \ea\label{eq3.11}
  \ee
where the last equality follows from \eqref{eq:wfun1} and \eqref{eq:wfun0}.
\end{proof}

The following corollary is immediate.
\begin{corollary}
  Under the same assumption of Theorem~\ref{thm:biharksproperty},
  the residual kernel $\br_\ell(r)$ and the difference kernel $\bd_\ell(r)$ defined in \eqref{eq:bh_kernelsplit2} satisfy
  \be
  \br_\ell(r) = 0, \quad \bd_\ell(r) = 0, \qquad r>r_\ell.
  \label{eq:reskernelproperty}
  \ee
\end{corollary}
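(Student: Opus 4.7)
The plan is to reduce everything to Theorem~\ref{thm:biharksproperty} by a scaling argument, then use linearity. The key observation is that the Fourier-space definition $\bmohat_\ell(k) = \wfunhat(k r_\ell)/k^4$ is obtained from $\Bmollhat(k) = \wfunhat(k)/k^4$ by a dilation, so $\bmo_\ell$ is itself the convolution of $B$ with a \emph{rescaled} mollifier.

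Concretely, I would first define $\wfun_\ell(\vx) := r_\ell^{-d} \wfun(\vx / r_\ell)$ and verify that $\wfun_\ell$ is again a biharmonic mollifier supported in the ball of radius $r_\ell$. The standard Fourier-transform dilation identity gives $\wfunhat_\ell(\vk) = \wfunhat(r_\ell \vk)$, so $\wfunhat_\ell(0) = \wfunhat(0) = 1$ and $\wfunhat_\ell''(0) = r_\ell^2 \wfunhat''(0) = 0$, meaning the defining conditions \eqref{eq:wfunproperties} are preserved under scaling. Radial symmetry and evenness are obvious. By the convolution theorem, this yields $\bmo_\ell(\vx) = (\wfun_\ell * B)(\vx)$.

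Next, I would observe that the proof of Theorem~\ref{thm:biharksproperty} carries over verbatim if the support radius $1$ is replaced by any positive number $a$: the computation in \eqref{eq3.9}--\eqref{eq3.11} only uses that $\wfun$ is supported in the ball of radius $a$ together with properties \eqref{eq:wfun1} and \eqref{eq:wfun0}, which we have just verified for $\wfun_\ell$ with $a = r_\ell$. Applying this to $\wfun_\ell$ gives $\bmo_\ell(r) = B(r)$ for all $r > r_\ell$.

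The corollary then follows immediately by linearity. Since $\brhat_\ell(k) = \Bhat(k) - \bmohat_\ell(k)$, we have $\br_\ell(r) = B(r) - \bmo_\ell(r) = 0$ for $r > r_\ell$. Similarly, since $r_{\ell+1} = r_\ell / 2 < r_\ell$, we obtain both $\bmo_{\ell+1}(r) = B(r)$ and $\bmo_\ell(r) = B(r)$ on the set $r > r_\ell$, so their difference $\bd_\ell(r)$ vanishes there. There is no real obstacle: the substantive content lives in Theorem~\ref{thm:biharksproperty}, and the only thing to check is that dilation by $r_\ell$ preserves the biharmonic-mollifier property, which is a one-line computation.
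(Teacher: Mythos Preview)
Your proposal is correct and is exactly the natural argument: the paper itself gives no proof beyond the sentence ``The following corollary is immediate,'' and the scaling argument you spell out is precisely what makes it immediate. Your verification that dilation preserves the biharmonic-mollifier conditions and that Theorem~\ref{thm:biharksproperty} applies with support radius $r_\ell$ in place of $1$ fills in the details the authors left to the reader.
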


Theorem~\ref{thm:biharksproperty} assumes that $\wfun$ is compactly supported in physical space. If $\wfun$ is not compactly supported but decays rapidly, a similar procedure can be carried out to show that the residual kernel is not identically zero, but decays rapidly, with the decay rate depending on the chosen biharmonic mollifier. 
Without entering into details, the key ingredient is the spherical mean of the biharmonic 
Green's function, which is given by the formulas:
\be
\frac{1}{\omega_{d-1} s^{d-1}}\int_{S_s(\vzero)} B(|\vx-\vy|)dS(\vy)
=\frac{1}{8\pi}\left\{\begin{aligned}&(s^2+r^2)\log s -\frac{s^2+r^2}{2} + r^2, \quad r\le s,\\
&(s^2+r^2)\log r -\frac{s^2+r^2}{2} + s^2, \quad r\ge s
\end{aligned}\right.
\label{eq3.12}
\ee
for $d=2$, and
\be
\frac{1}{\omega_{d-1} s^{d-1}}\int_{S_s(\vzero)} B(|\vx-\vy|)dS(\vy)
=-\frac{1}{24\pi}\left\{\begin{aligned}&
3s+\frac{r^2}{s}, \quad r\le s,\\
&3r+\frac{s^2}{r}, \quad r\ge s,
\end{aligned}\right.
\label{eq3.13}
\ee
for $d=3$.
As an example, the Hasimoto split in~\eqref{eq:hasimoto-split}
starts from the Gaussian $\wgausshat(k) = e^{-k^2\sigma^2/4}$
and the associated biharmonic mollifier
is $\wfunhat_G(k) = \wgausshat(k) \left(1-\wgausshat(0) k^2/2\right)$.
\begin{remark}
  We could start from the Gaussian to build a compactly supported biharmonic
  mollifier as well. This involves renormalization of the biharmonic
  residual kernel in~\eqref{eq:hasimoto-split} and associated modification of the
  Fourier transform of the biharmonic mollified kernel in~\eqref{eq:hasimoto-split}.
  This kernel split is different from the Hasimoto split, since its residual kernel
  is compactly supported according to Theorem~\ref{thm:biharksproperty}. 
\end{remark}

\section{Biharmonic kernel formulas based on window function} \label{sec:prolks}
We turn now to a specific choice for the biharmonic mollifier. In order to satisfy
the key property \eqref{eq:wfunproperties}, we choose
\begin{equation}
\wfunhat(k)=\wfunchat(k) - \frac{1}{2} k \wfunchat'(k),
\label{eq:gammaB_in_phi}
\end{equation}
where $\wfunchat(k)$ is the Fourier transform of a window function $\wfunc(x)$, which is an even function with $\wfunchat(0)=1$.
We have introduced two such window functions in \eqref{eq:wfunchat_prol_G}-\eqref{eq:wfunc_prol_G}, the prolate function $\wprol^c$ and the Gaussian $\wgauss$.
The biharmonic mollifier based on $\wprol^c$ will be compactly supported and hence will yield a compactly supported residual kernel, while the one based on $\wgauss$ will be rapidly decaying, as discussed after 
\cref{thm:biharksproperty}.

\begin{remark}
  Unlike Gaussians whose derivatives are the product of the same Gaussian
  with a polynomial, the derivatives of prolates do not have such property.
  Thus, even though 
  $\wfunhat(k)=\wfunchat(k)\left(1 - \frac{1}{2} \wfunchat''(0)k^2\right)$
  will lead to the same biharmonic mollifier $\wfunhat$  when $\wfunchat$ is a Gaussian,
  it provides a slightly
  different biharmonic kernel split from \eqref{eq:gammaB_in_phi} when $\wfunchat$ is
  a prolate.
  The performance of these two biharmonic kernel splittings using prolates
  is very close. In the remainder of this paper, we focus on \eqref{eq:gammaB_in_phi}.
\end{remark}

\subsection{The residual biharmonic kernel}
\label{sec:biharmonic-split}
With the mollified biharmonic kernel defined in the Fourier domain, the residual kernel in physical space 
can be computed numerically. The procedure for this was introduced in \cite{jiang2025cpam}, and 
we outline it for the case of the biharmonic in \cref{app:num_fourier_split}. However, it will
be convenient to have an explicit expression for it in three dimensions, so we now compute this
 using the biharmonic mollifier in \eqref{eq:gammaB_in_phi}.
We first note that the mollifier's \emph{one-dimensional} inverse Fourier transform is 
\begin{align}
  \gamma(x)=\frac{1}{\pi}\int_0^\infty \cos(kx) \wfunhat(k)dk =
  \frac{3}{2}\wfunc(x) + \frac{1}{2} x \wfunc'(x).
  \label{eq:biharmonic_screen_R}
\end{align}

\begin{theorem} \label{thm:Bres}
  Suppose that $\Bmollhat(k)=\bmohat_0(k)$ is given by \eqref{eq:generalbiharks},
  that $\wfunhat(k)$ is given by \eqref{eq:gammaB_in_phi} and that $\gamma(x)$
  is given by \eqref{eq:biharmonic_screen_R}. Then in $\R^3$,
  \begin{align}
    \Bres(r)=-\frac{1}{8\pi}r-\Bmoll(r)
    = -\frac{1}{8\pi}\left(r - 2 r \int_0^r \wfunc(s) \dif s
    + 2  \int_0^r s \wfunc(s) \dif s
    - 2  \int_0^\infty s \wfunc(s) \dif s\right). 
    \label{eq:BR_gen}
  \end{align}
\end{theorem}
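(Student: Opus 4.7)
The plan is to work from $\Bmoll = \widetilde\wfun \ast B$ in $\R^3$, where $\widetilde\wfun$ denotes the three-dimensional radial inverse Fourier transform of $\wfunhat(|\vk|)$, and then close by $\Bres(r) = B(r) - \Bmoll(r) = -r/(8\pi) - \Bmoll(r)$. The first ingredient is an identity that reduces the 3D object $\widetilde\wfun$ to the 1D function $\wfun$ defined in \eqref{eq:biharmonic_screen_R}. Applying the 3D radial inversion formula from \eqref{eq:radial_ft} and comparing with the derivative of the 1D inversion \eqref{eq:biharmonic_screen_R} gives
\begin{equation*}
\widetilde\wfun(r) \;=\; \frac{1}{2\pi^2 r}\int_0^\infty k\sin(kr)\,\wfunhat(k)\,dk \;=\; -\frac{\wfun'(r)}{2\pi r},
\end{equation*}
so in particular $s\widetilde\wfun(s) = -\wfun'(s)/(2\pi)$. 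This is the algebraic bridge that turns 3D spherical averages back into one-dimensional calculus on $\wfun$.

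Next I switch to polar coordinates about the target in the convolution defining $\Bmoll(r)$, recognize the angular integral as the spherical mean of $B$ given explicitly by \eqref{eq3.13}, and split the radial integral at $s=r$ to obtain
\begin{equation*}
\Bmoll(r) \;=\; -\frac{1}{6}\Bigl[\int_0^r \widetilde\wfun(s)\bigl(3rs^2 + s^4/r\bigr)\,ds + \int_r^\infty \widetilde\wfun(s)\bigl(3s^3 + r^2 s\bigr)\,ds\Bigr].
\end{equation*}
Substituting the bridge identity turns each integrand into $\wfun'(s)$ times a polynomial in $s$ (with coefficients depending on $r$). I then integrate by parts once in each integral. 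The $s=0$ boundary terms vanish because every polynomial factor carries $s$; the $s=r$ boundary terms cancel between the inner and outer pieces because the two polynomials agree at $s=r$; the $s=\infty$ terms vanish by the decay of $\wfunc$. Collecting the survivors gives
\begin{equation*}
\Bmoll(r) \;=\; -\frac{r}{4\pi}\int_0^r \wfun(s)\,ds \;-\; \frac{1}{4\pi r}\int_0^r s^2\wfun(s)\,ds \;-\; \frac{1}{2\pi}\int_r^\infty s\,\wfun(s)\,ds.
\end{equation*}

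To finish, I substitute $\wfun(s) = \tfrac{3}{2}\wfunc(s) + \tfrac{1}{2}s\wfunc'(s)$ from \eqref{eq:biharmonic_screen_R} into each of the three integrals and integrate by parts once more on the $\wfunc'$ term. A short computation shows that the three resulting boundary contributions proportional to $r^2\wfunc(r)$ cancel exactly, and the $\tfrac{3}{2}\int s^2\wfunc$ contribution from the middle integral cancels against a piece generated by the IBP there; only
\begin{equation*}
\Bmoll(r) \;=\; -\frac{r}{4\pi}\int_0^r \wfunc(s)\,ds \;-\; \frac{1}{4\pi}\int_r^\infty s\,\wfunc(s)\,ds
\end{equation*}
survives. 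Rewriting $\int_r^\infty s\wfunc\,ds = \int_0^\infty s\wfunc\,ds - \int_0^r s\wfunc\,ds$ and inserting into $\Bres = -r/(8\pi) - \Bmoll$ then yields \eqref{eq:BR_gen}.

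The main obstacle is the two-stage integration-by-parts bookkeeping: at the first stage the boundary terms at $s=r$ in the inner and outer integrals must cancel exactly, and at the second stage three separate $r^n\wfunc(r)$ boundary contributions (one from each of the three integrals) must telescope to zero. Beyond this, all one needs is that $s^2\wfunc(s)\to 0$ as $s\to\infty$ and that $\int_0^\infty s\wfunc(s)\,ds$ be finite, both of which hold for the PSWF and Gaussian windows introduced in \cref{sec:prol}.
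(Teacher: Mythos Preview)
Your proof is correct, but it takes a genuinely different route from the paper. The paper computes the 3D radial inverse transform $\Bmoll(r)=\frac{1}{2\pi^2 r}\int_0^\infty \frac{\sin(kr)}{k^3}\wfunhat(k)\,dk$, sets $I(r)=-\frac{1}{\pi}\int_0^\infty \frac{\sin(kr)}{k^3}\wfunhat(k)\,dk$, differentiates three times to get $I'''(r)=\gamma(r)$, and then integrates back, fixing the three constants of integration by requiring $\Bmoll$ to be smooth and bounded at the origin and $B-\Bmoll\to 0$ at infinity. Your argument instead stays in physical space: you realize $\Bmoll=\widetilde\wfun\ast B$, exploit the bridge $\widetilde\wfun(s)=-\wfun'(s)/(2\pi s)$ between the 3D and 1D inverse transforms, and then use the explicit spherical mean of $B$ from \eqref{eq3.13} to reduce the convolution to a one-dimensional radial integral split at $s=r$.

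Each approach has its merits. The paper's Fourier/ODE method is short and requires only the 1D identity \eqref{eq:biharmonic_screen_R}, but pays for this by having to determine three integration constants via asymptotic matching. Your convolution approach reuses the machinery already developed for \cref{thm:biharksproperty} and \eqref{eq3.13}, and its payoff is that no free constants appear: the two rounds of integration by parts produce the answer directly, with all boundary contributions at $s=r$ and the $r^2\wfunc(r)$ terms telescoping to zero exactly as you describe. The decay hypotheses you invoke ($s^2\wfunc(s)\to 0$ and $\int_0^\infty s\wfunc(s)\,ds<\infty$, hence also $s^2\wfun(s)\to 0$) are the natural ones and hold for both the PSWF and Gaussian windows.
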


\begin{proof}
In order to compute $\Bres(r)=-\frac{1}{8\pi}r-\Bmoll(r)$, we need the inverse
transform
\begin{align}
    \Bmoll(r) &= \frac{1}{2\pi^2} \int_0^\infty \frac{\sin(kr)}{kr} \Bmollhat(k) 
      k^2 \dif k
              = \frac{1}{2\pi^2 r} \int_0^\infty \frac{\sin(kr)}{k^3}
              \widehat\gamma(k)
              \dif k
              = -\frac{1}{2\pi r} I(r),
\end{align}
where
\begin{align}
  I(r) = -\frac{1}{\pi} \int_0^\infty \frac{\sin(kr)}{k^3}
              \widehat\gamma(k)
              \dif k .
\end{align}
To compute $I(r)$, we differentiate three times, noting that this leads to 
the one-dimensional inverse Fourier transform,
\begin{align}
  I'''(r) &= \frac {1}{\pi}\int_0^\infty  \cos(kr)
           \widehat\gamma(k)
           \dif k = \gamma(r) = \frac{3}{2}\wfunc(r) + \frac{1}{2} r \wfunc'(r).
\end{align}
Integrating by parts three times, we have 
\begin{align}
  I''(r) &= \int_0^r \wfunc(s) \dif s + \frac{1}{2} r \wfunc(r) + C_1, \\
  I'(r) &=  r \int_0^r \wfunc(s) \dif s - \frac{1}{2} \int_0^r s \wfunc(s) \dif s + C_1 r + C_2, \\
  I(r) &= \frac{1}{2} \del{ r^2 \int_0^r \wfunc(s) \dif s
         -  r \int_0^r s \wfunc(s) \dif s + C_1 r^2 + C_2 r + C_3 }. 
\end{align}
Thus
\begin{align}
  \Bmoll(r) &= -\frac{1}{8\pi}\left(2r \int_0^r \wfunc(s) \dif s
         - 2 \int_0^r s \wfunc(s) \dif s + C_1 r + C_2 + \frac{C_3}{r}\right) .
\end{align}
We determine the constants by requiring that $B_M(r)$ be smooth and bounded as $r\to 0$,
and
$\lim_{r\rightarrow \infty}(B(r)-B_M(r))=0$. This yields 
\begin{align}
  \Bmoll(r) &= -\frac{1}{8\pi}\left(2 r \int_0^r \wfunc(s) \dif s
              -2 \int_0^r s \wfunc(s) \dif s
              +2 \int_0^\infty s \wfunc(s) \dif s\right)
              \label{eq:Bmoll_gen}
\end{align}
and the desired result \eqref{eq:BR_gen}. 
\end{proof}

\begin{corollary}
  The first three derivatives of the residual biharmonic kernel $\Bres(r)$ 
are
\be
\begin{split}
    \Bres'(r) &= -\frac{1}{8\pi}\left(1 - 2 \int_0^r \wfunc(s) \dif s\right)
    = -\frac{1}{8\pi}\Phi(r), \\
    \Bres''(r) &= \frac{1}{4\pi} \wfunc(r) , \\
    \Bres'''(r) &= \frac{1}{4\pi} \wfunc'(r),
\end{split} \label{eq:BR_ders} 
\ee
with $\Phi(r)$ defined by the formula
\begin{align}
    \Phi(r) = 2\int_r^\infty \wfunc(t) \dif t.
    \label{eq:def_erfc_like_fcn}
\end{align}
\end{corollary}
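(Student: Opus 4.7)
The plan is to prove the corollary by direct termwise differentiation of the expression for $\Bres(r)$ provided in Theorem~\ref{thm:Bres}, applying the Fundamental Theorem of Calculus. The only non-obvious step is verifying that the first derivative, as computed directly, actually equals the claimed $-\Phi(r)/(8\pi)$; this will require invoking the normalization of $\wfunc$.

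First, I would differentiate
\[
\Bres(r)= -\frac{1}{8\pi}\Bigl(r - 2 r \int_0^r \wfunc(s)\,\dif s + 2 \int_0^r s \wfunc(s)\,\dif s - 2 \int_0^\infty s \wfunc(s)\,\dif s\Bigr).
\]
The last integral is a constant in $r$ and vanishes. Using the product rule on $r\int_0^r\wfunc(s)\,\dif s$ gives $\int_0^r\wfunc(s)\,\dif s + r\wfunc(r)$, and differentiating $\int_0^r s\wfunc(s)\,\dif s$ yields $r\wfunc(r)$. The two $r\wfunc(r)$ terms cancel, leaving
\[
\Bres'(r) = -\frac{1}{8\pi}\Bigl(1 - 2\int_0^r \wfunc(s)\,\dif s\Bigr).
\]

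Next I need to rewrite this as $-\Phi(r)/(8\pi)$ with $\Phi(r)=2\int_r^\infty\wfunc(t)\,\dif t$. For this I use the fact that $\wfunchat(0)=1$, together with the evenness of $\wfunc$, implies
\[
\int_0^\infty \wfunc(s)\,\dif s = \tfrac12 \int_{-\infty}^\infty \wfunc(s)\,\dif s = \tfrac12 \wfunchat(0) = \tfrac12.
\]
Hence $1 = 2\int_0^\infty \wfunc(s)\,\dif s$, and splitting the integral at $r$ gives $1 - 2\int_0^r\wfunc(s)\,\dif s = 2\int_r^\infty\wfunc(s)\,\dif s = \Phi(r)$, as required.

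The remaining two formulas follow immediately. Differentiating $\Bres'(r)$ once more, the constant $1$ drops out and the Fundamental Theorem of Calculus gives $\Bres''(r) = \wfunc(r)/(4\pi)$, and a final differentiation yields $\Bres'''(r) = \wfunc'(r)/(4\pi)$. The main (only) subtlety is the normalization argument in the first step; everything else is routine calculus.
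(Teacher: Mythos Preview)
Your proof is correct and matches the paper's approach: the paper simply states that the result follows immediately from \cref{thm:Bres}, and your direct termwise differentiation is exactly how one verifies this. The normalization step you flag (using $\wfunchat(0)=1$ and evenness to get $\int_0^\infty\wfunc(s)\,\dif s=\tfrac12$) is indeed the only point requiring comment, and you handle it correctly.
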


The result follows immediately from \cref{thm:Bres} and will be needed in \cref{sec:stokes-split-deriv}
to derive the Stokeslet and stresslet splittings.

\section{Summation of the Stokes kernels}
In this section, we give the formulas for the split of the Stokeslet, the stresslet and the rotlet expressed with the window function $\wfunc$.

The split of the Stokeslet and the stresslet can be obtained by differentiating the split of the biharmonic kernel introduced above. 
The rotlet will instead be based on the split of the harmonic kernel, 
\begin{equation}
\Hres(r)=\frac{\Phi(r)}{4\pi r}, \qquad \Hmollhat(k)=\frac{1}{k^2}\wfunchat(k), 
\label{eq:harmonic_split}
\end{equation}
with $\Phi$ defined from $\wfunc$ in \eqref{eq:def_erfc_like_fcn}.
For the Gaussian window function $\wgauss$ (\ref{eq:wfunc_prol_G}), this is the classical Ewald split introduced in \eqref{eq:ewald-split}.

\subsection{The self-interaction correction}

Suppose that the target $\vx_\tind$ is also a source location and that we
intend to compute
\[
u(\vx_\tind) = \sum_{\substack{\sind=1 \\ \sind\neq\tind}}^N K(\vx_\tind-\vx_\sind) \rho_\sind, \quad \tind=1,\dots,N,
\]
without any {\em self-interaction}. 
For a general splitting, however, both Ewald and DMK-type algorithms compute
\[
\tilde{u}(\vx_\tind) = \sum_{\sind=1}^N K_M(\vx_\tind-\vx_\sind) \rho_\sind 
\ +\ \sum_{\substack{\sind=1\\ \sind \neq \tind}}^N K_R(\vx_\tind-\vx_\sind) \rho_\sind 
\quad \tind=1,\dots,N.
\]
As some contribution from the self interaction is included through the mollified kernel, we must add to $\tilde{u}(\vx_\tind)$ a correction term 
\be
u(\vx_\tind) = \tilde{u}(\vx_\tind)+ u_{\rm self}(\vx_\tind)
\ee
where
\be
u_{\rm self}(\vx_\tind) = -
\left(\lim_{r\rightarrow 0} K_M(r)\right)\rho_{\tind}.
\ee
We describe these corrections (when needed) in the next section, as we derive the full
Stokes kernels. 

\subsection{The Stokes kernels}
\label{sec:stokes-split}

Here,
we collect the formulas for the mollified and residual kernels for 
the Stokeslet, stresslet and rotlet in three dimensions. The derivations can be found in 
\cref{sec:stokes-split-deriv}, with residual kernels for two dimensions computed using the procedure of \cref{app:num_fourier_split}.

The {\em mollified Stokeslet} in Fourier space is given by
\begin{align}
    \widehat S_{M,jl}(\v k) = \left(k^2 \delta_{jl} - k_j k_l\right) \Bmollhat(k),
  \quad k=\abs{\v k}.
  \label{eq:moll_stokeslet}
\end{align}

The {\em residual Stokeslet} is given by
\begin{align}
    S_{R,jl}(\vx)
    &=   \RSdiag(r) \frac{\delta_{jl}}{8\pi r}
      +   \RSoffd(r)  \frac{x_jx_l}{8\pi r^3},
      \label{eq:res_stokeslet}
\end{align}
where 
\be
  \RSdiag(r) = 
               \Phi(r) - 2 r\wfunc(r),  \qquad
  \RSoffd(r) = 
               \Phi(r) + 2 r\wfunc(r).
               \label{eq:Soffdiag}
\ee
with $\Phi$ defined from $\wfunc$ in \eqref{eq:def_erfc_like_fcn}.
The rate of decay of these functions will depend on the choice of $\wfunc(r)$.
The self interaction term is given by
\begin{align}
\uself(\vx_\tind) = -\lim_{|\v x|\to 0} S_M(\v x) \rho_\tind=
-\frac{1}{2\pi} \wfunc(0) \rho_\tind.
\label{eq:S_self}
\end{align}

The {\em mollified stresslet} in Fourier space is given by
\begin{align}
    \widehat T_{M,jlm}(\v k) &=
 i \left[(k_m \delta_{jl} +k_j \delta_{lm} +k_l  \delta_{mj}) k^2   -2
  k_j k_l k_m \right]  
                             \Bmollhat(\vk) .
                             \label{eq:moll_stresslet}
\end{align}

The {\em residual stresslet} by
\begin{align}
  T_{R,jlm}(\vx) = \frac{\delta_{jl}x_m + \delta_{lm}x_j + \delta_{mj}x_l}{8\pi r^3} \, \RTdiag(r)
  - \frac{3}{4\pi}\frac{x_jx_lx_m}{r^5} \, \RToffd(r),
  \label{eq:res_stresslet}
\end{align}
where 
\be
  \RTdiag(r) = 
               -2 r^2 \wfunc'(r) ,
\quad                
  \RToffd(r) = 
               \Phi(r) + 2r \wfunc(r)  - \tfrac{2}{3} r^2 \wfunc'(r).
               \label{eq:T-offd}
\ee
The self interaction term for the stresslet vanishes: 
$\uself(\vx_\tind) = 0$.

The {\em mollified rotlet} in Fourier space is given by
\begin{align}
  \widehat \Omega_{M,jl}(\v k) = - \frac{1}{2} i \epsilon_{jlm} k_m \widehat H_M(\v k).
  \label{eq:moll_rotlet}
\end{align}

The {\em residual rotlet} follows directly from \eqref{eq:rotlet_def_deriv} together with \eqref{eq:harmonic_split} and is given by
\begin{align}
  \begin{split}
    \Omega_{R,jl}(\vx) = 
    \frac{1}{8\pi} \epsilon_{jlm} \frac{x_m}{r^3} \ROoffd(r),
  \end{split}
  \label{eq:res_rotlet}
\end{align}
with
\begin{align}
  \ROoffd(r) =\Phi(r) + 2r\wfunc(r).
  \label{eq:ROoffd}
\end{align}
The self-interaction term for the rotlet vanishes: $\uself(\vx_\tind) = 0$.

\begin{remark}To scale the Stokes kernels with a length scale $\nu$, all real space functions ($\RSdiag$, $\RSoffd$, $\RTdiag$, $\RToffd$, $\ROoffd$) should be evaluated at $r/\nu$, all else is left the same.  The scaling in Fourier space enters in $\Bmollhat$, through $\wfunhat(k\nu)$ as given in \eqref{eq:generalbiharkdefs}.
\end{remark}

\begin{remark}
  Assuming a smooth $\wfunc(r)$, the real-space functions ($\RSdiag$, $\RSoffd$, $\RTdiag$,
  $\RToffd$, $\ROoffd$) used in constructing the various Stokes kernels above are all smooth
  functions of $r$, and can be accurately represented and rapidly
  evaluated using a polynomial approximation which is precomputed for each $\wfunc$.
\end{remark}
\subsection{The total sum}
\label{sec:totsum}
Using a one-level split of the kernels, and inserting into the sum \eqref{eq:fs_sum} for a free-space problem, or \eqref{eq:periodic_sum} for a periodic problem, we obtain
\begin{equation}
    u(\xb)=\ulocal(\xb)+\ufar(\xb)+\uself(\xb), \qquad \tind=1,\ldots,N.
\end{equation}

For the free space problem we have 
\begin{align}
    \ulocal(\vx_\tind) &= \sum_{\substack{\sind=1\\\sind\ne \tind}}^N  \Kres(\vx_\tind-\vx_\sind) \rho_\sind, 
                     \label{eq:fs_ewald_local}
\\
  \ufar(\vx_\tind) &= \sum_{\sind=1}^N 
    \cbr{ \frac{1}{(2\pi)^3} \int_{\R^3} \Kmollhat(\vk) e^{i\vk\cdot(\vx_{\tind}-\vx_\sind)} d\vk + K^\truncrad_{\text{corr}}}
        \rho_\sind. \label{eq:fs_ewald_far}
\end{align}
Note that the mollified biharmonic kernel in \eqref{eq:generalbiharks} is singular at $\vk = \vzero$, as it contains the factor $\Bhat(|\vk|)=1/|\vk|^4$. As this carries through to the Stokeslet and the stresslet, integrals in the Fourier domain cannot be accurately approximated using a simple trapezoidal
rule. For free-space problems, we replace the biharmonic kernel $\Bhat$ in \eqref{eq:moll_stokeslet} and \eqref{eq:moll_stresslet} by the truncated
alternative $\widehat B^\truncrad(k)$ from \eqref{eq:Btrunc_hat}.  That is, we will
use the \emph{truncated mollified kernel} given by
\be
\Bhat^\truncrad_{M}(k) = \widehat B^\truncrad(k) \wfunhat(k),
\label{eq:Bmollhat_trunc}
\ee
in the definition of $\widehat S_{M}$ and $\widehat T_{M}$, and then discretize the integral in \eqref{eq:fs_ewald_far} using the trapezoidal rule. 
Similarly, we replace $\widehat H(k)$ in \eqref{eq:moll_rotlet} by the truncated alternative $\widehat H^\truncrad(k)$, as also used in \cite{jiang2025cpam}.
The use of a truncated kernel for the far-field evaluation free-space can introduce a constant term which must be corrected for using the term $K^\truncrad_{\text{corr}}$.
This is zero for the stresslet and rotlet, but for the Stokeslet we have (see \eqref{eq:msdiag} in \cref{app:num_fourier_split})
\begin{align}
 K^\truncrad_{\text{corr}} = S^\truncrad_{\text{corr}} =
  \begin{cases}
    \frac{1}{4\pi}(1-\log\truncrad), & d=2, \\
    \frac{1}{4\pi\truncrad}, & d=3.
  \end{cases}
\end{align}

For the periodic problem on the unit cube, we get
\begin{align}
    \ulocal(\vx_\tind) &= \sum_{\sind=1}^N \sum_{\v p\in\mathbb Z^3}^* 
    \Kres(\vx_\tind-\vx_\sind + \v p) 
    \rho_\sind,
    \label{eq:ewald_local}
\end{align}
where the $(*)$ notation signifies that the $(\v p=0, \tind=\sind)$ term is ignored. 
The far-field part is evaluated in Fourier space using the Poisson summation formula 
for a unit cube
\begin{align}
\ufar(\vx_\tind) &= \sum_{\sind=1}^N \sum_{\v p\in\mathbb Z^3} \Kmoll(\vx_\tind-\vx_\sind + \v p) \rho_\sind
= \sum_{\sind=1}^N \sum_{\substack{\vk\in\mathcal K\\ \vk \ne 0}} \Kmollhat(\vk) e^{i\vk\cdot(\vx_\tind-\vx_\sind)}
  \rho_\sind
  + u^{(0)}(\vx_\tind).
\label{eq:ewald_far}
\end{align}
with $\mathcal K = \{2\pi\kappa : \kappa\in\mathbb Z^3\}$.
 The term $u^{(0)}$ is a constant corresponding to the term $\vk=0$ omitted from the Fourier sum. To obtain a zero mean flow for $u(\vx_{\tind})$ for the Stokeslet and the rotlet, this term can be set to zero. For the stresslet however, refering back to the specific notation in \eqref{eq:stokes_stress_sum}, we need to set 
\begin{equation}
     u^{(0)}(\vx_\tind)=- \sum_{\sind=1}^N
     (\v x_{\tind}-\v x_{\sind})\, ({\bf f}_{\sind} \cdot {\bf n}_{\sind}). 
\end{equation}
See \cite{AfKlinteberg2014a} and \cite[Appendix~A]{bagge_fast_2023} for details. 

Introducing the multilevel split of the Stokes kernels, following the multilevel split of the biharmonic kernel in 
 \eqref{eq:generalbiharks}, we obtain a decompostion of the kernel of the form \eqref{eq:multilevel_ks}. Once inserted in the free-space \eqref{eq:fs_sum} or periodic \eqref{eq:periodic_sum} sum, we get
\begin{align}
        u(\vx_{\beta}) = u_0^{\rm far}(\vx_{\beta})+ \sum_{\ell=0}^{L-1} u_{\ell}^{\rm diff}(\vx_{\beta}) +u_L^{\rm local}(\vx_{\beta}) + \uself(\vx_\beta),
\label{eq:three_part_sum}
    \end{align}
where the subindices indicate the different scalings introduced. 
In the DMK method, quadrature in the Fourier domain (as explained in section \ref{sec:trapquad}) is used for the difference kernels, that are smooth also in Fourier space. The diagonal translation property of this representation, together with the localization of both the residual kernel and the difference kernel at different scales, are key properties utilized when constructing the DMK method.  
The maximum level $L$ in the sum will vary for different target points, as tied to an adaptive level restricted oct-tree.

\section{The DMK method} \label{sec:fast}

In this section, we first summarize the DMK framework and
refer the readers to \cite{jiang2025cpam} for more details, where the free-space case was considered. We then discuss the modifications needed for periodic boundary conditions on a cube. 

\subsection{Summary of the DMK framework}
In brief,
DMK is a hierarchical fast algorithm that uses Fourier convolution to compute
interactions with near neighbors at each 
spatial scale with a {\em short} Fourier transform of length $O(\log^d(1/\epsilon))$,
{\em independent of level}. Given a (correctly chosen) telescoping series of the form
\eqref{eq:multilevel_ks}, interactions with the $M_0$ kernel are carried out at level $0$,
and interactions with the difference kernel $\kd_\ell$ are carried out at level $\ell$. 
Once a leaf node at level $L$ is reached, residual kernel interactions are computed
with nearest neighbors alone. 
A critical aspect of the method is that since the mollified potentials are smoother
and smoother at coarser and coarser levels, one can replace the sources with 
an equivalent (and much smaller) set of ``proxy" charges at each level. Likewise,
the mollified potential can be sampled on a tensor product grid with a fixed 
set of $p^d$ points on every box in the tree hierarchy. 
Finally, the depth of the telescoping series can vary based on 
the extent of clustering in the sources using an adaptive tree.

\paragraph{Step 1: Tree construction:}

(a) Sort the sources and targets hierarchically using an 
adaptive quad-tree (2D) or oct-tree (3D) until there are fewer 
than $s$ sources in a leaf node.  Some definitions are in order:
the colleagues of a box $\unitbox$ are the boxes at the same refinement level which share 
a boundary point with $\unitbox$ (including $\unitbox$ itself.) 
The coarse neighbors of $\unitbox$ are leaf nodes 
at the level of $\unitbox$’s parent which share a boundary point with $\unitbox$. The fine neighbors of $\unitbox$ are leaf nodes one level finer than $\unitbox$ which share a boundary. The union of the colleagues, coarse neighbors, and fine neighbors of $\unitbox$ are referred to as $\unitbox$’s neighbors.

(b) Ensure that the tree is level-restricted: that is,
two leaf nodes which share a boundary point must be no more than one refinement level 
apart (see Fig.~\ref{fig:trees}). This can result in slightly different data structures
for the free-space and periodic cases. See
\cite{ethridge2001sisc,sundar2008sisc} for further details.

(c) Superimpose a tensor product proxy grid of $p^d$ scaled Chebyshev nodes
on each box in the tree hierarchy.
\begin{figure}[ht]
  \centering
  \includegraphics[height=50mm]{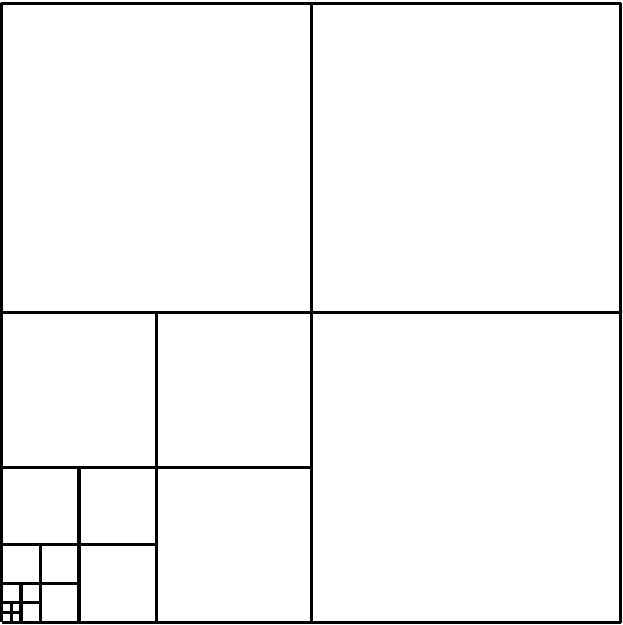}
  \hspace{6mm}
  \includegraphics[height=50mm]{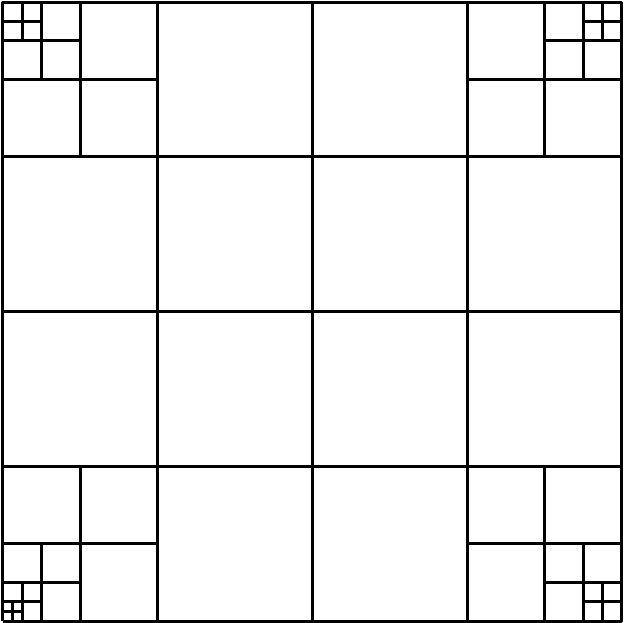}

  \caption{Level-restricted adaptive trees in two dimensions.
On the left is an example that satisfies the level restriction for free space
interactions. In the periodic setting however, the fine grid boxes are neighbors
of coarse periodic images. On the right, we illustrate the additional refinements
needed to satisfy the level-restriction property in the periodic case.}
  \label{fig:trees}
\end{figure}

\paragraph{Step 2: Upward Pass:}

(a) Construct an equivalent set of $p^d$ tensor-product {\em proxy} sources
on each leaf node. Assuming the leaf node is at level $\ell$,
the proxy charges are chosen to induce the same field as the original
sources to the desired precision $\epsilon$ assuming the interaction kernel
is given by the (locally smooth) difference kernel $\kd_\ell(\v x)$.

(b) In a fine-to-coarse sweep, recursively 
merge the proxy sources in all of the $2^d$ children into
a single proxy grid at the parent level.
[{\em  For nonoscillatory kernels such as
the Coulomb or biharmonic kernel, $p$ can be held constant at each level with
no loss of precision, resulting in a significant form of data compression}.] 

\paragraph{Step 3: Downward Pass:}

(a)
At level $\ell=0$,  compute the convolution of the proxy grid in the root box with the
truncated mollified kernel $\km_0^\truncrad(k)$ from 
\eqref{eq:Bmollhat_trunc} (for the free-space case, see \cref{sec:periodicity} for the periodic case). The output points can be chosen to be the same 
tensor product proxy grid, and we denote the values stores as the {\em proxy potentials}.
Once available, interpolate the proxy potentials to the children at level $1$.

(b)
In a coarse-to-fine sweep, 
for each box $\unitbox$ at level $\ell$, the 
convolution of the proxy grid in each of $\unitbox$'s colleagues 
 with the difference kernel $\kd_\ell$ is 
computed in the Fourier domain and added to the proxy potentials in $\unitbox$
using translation operators. Once the proxy potentials are available, they are 
interpolated to $\unitbox$'s children.
We refer the reader to \cite{jiang2025cpam} for technical details, except to 
note that work per box is independent of the level, combining the observation
in \cref{rmk:scaling} with the careful construction of the multilevel splitting
in \cref{sec:biharmonicks}.

\paragraph{Step 4: Residual Interactions:}

Once a leaf node is reached, the residual kernel interactions governed
by $\kr_\ell(\vx)$ are localized to 
near neighbor boxes and computed directly. 

\begin{remark}
For readers familiar with the original DMK algorithm, 
we have made some minor changes. First, we use Fourier convolution only for 
colleague interactions. As a result, the direct interactions for a box now match
the direct interaction component in the fast multipole method (FMM), although
with a different kernel, so that we can exploit mature MPI implementations of 
the FMM, such as \cite{malhotra_pvfmm_2015}. Second, the Fourier scaling of the stresslet represents a contraction from 9 to 3 components, so it is more efficient to apply it to the outgoing expansion rather than the incoming expansions, as was the case in the original algorithm.
\end{remark}

\begin{remark}
Unlike Ewald summation, after tree construction, DMK is a linear scaling
algorithm {\em that does not depend on the FFT for its performance }. All the Fourier integrals are computed using a direct implementation of the discrete
Fourier transform since the expansion lengths are too small to benefit from
a fast algorithm. The speed comes, in essence, from hierarchical compression
as in the FMM.
\end{remark}

\subsection{Periodic boundary conditions in the DMK framework}
\label{sec:periodicity}
It is straightforward to modify the DMK algorithm in order to 
impose periodic boundary conditions on a cube. 
First, instead of using the windowed kernel at the root level, we use the (periodic)
Fourier series corresponding to the mollified kernel, see the discussion around equations \eqref{eq:fs_ewald_far} and
\eqref{eq:ewald_far}.

Second, we change the definition of neighbors to include periodic images of boxes in the unit cell
which share a boundary point. Hence, the changes for the difference and residual kernel interactions are trivial:
one simply replaces the free-space near neighbor list with the periodic near neighbor 
list.

From the discussion in \cref{rmk:scaling} regarding the difference kernel, the number of Fourier modes is $O(1)$ also for the truncated mollified kernel used in free-space since the bandlimit of $\kmhat_0(k)$ is $O(1)$ and 
the targets are restricted to the unit cube.
The number of Fourier modes needed are however typically smaller in the periodic case, as the discrete sum in this case arises directly from the Poisson summation formula, and not from the discrerization of an integral.   

\section{Error control and parameter selection}
\label{sec:par_selection}

One critical parameter in DMK is the order $p$ of the tensor product proxy grids
defined on each box. $p$ must be sufficiently large that it accurately
interpolates the proxy potentials in the downward pass and accurately anterpolates
the difference kernels in the upward pass.
The other critical parameters are the bandlimit $\Kmax$, and the choice of $N_1$ --
the number of discretization nodes in each dimension
needed for computing the Fourier integrals at each level. 
These integrals are 
of the form \eqref{ewalds3}, with 
$\Kdiffhat(\vk)=\widehat{K}_{D_l}(\vk)$
at level $\ell$
and $\widehat{\rho}(\vk)$ defined to be the Fourier transform of the proxy grid
sources in the box. 
The integrals are truncated at $k_i \in [-\Kmax/r_{l+1}, \Kmax/r_{l+1}]$ and the grid spacing is $h_l=2\pi/(3 r_l)= \pi / (3 r_{l+1})$. 
When using the prolate window function $\Kmax = c$ (and referred to as such in the 
experiments below).

To study how the error depends on $c$ and $p$ for the Stokes kernels,
we run parameter sweeps for each kernel on a system of 5000 uniformly
random points and strengths in the unit cube. The relative $l_2$ error
$E(c,p)$ of DMK, compared to a direct sum, is computed for every
$(c,p) \in \{\frac{\pi}{3}, \dots 50\cdot\frac{\pi}{3}\} \times
\{1,\dots,50\}$. An example of what the results look like is found in
Fig.~\ref{fig:stokeslet_param_sweep}. From the accumulated data, we find
the optimal (i.e. smallest) $(c,p)$ that satisfy a tolerance
$\epsilon=10^{\{-3,-6,-9,-12\}}$. These are listed in
\Cref{tab:params}, where optimal ($\sigma,p)$ values for the Gaussian
split is also found.

\begin{figure}[ht]
  \centering
  \begin{subfigure}{.49\textwidth}
    \centering
    \includegraphics[height=0.7\textwidth]{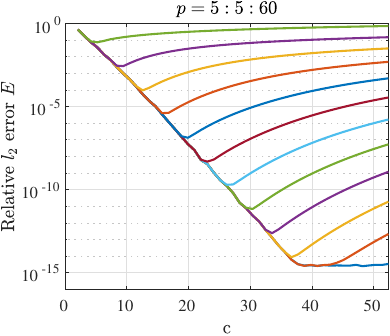}
    \caption{Error curves for $p$ held fixed with $c$ varying.}
    \label{fig:stokeslet_param_sweep_c}
  \end{subfigure}
  \begin{subfigure}{.49\textwidth}
    \centering
    \includegraphics[height=0.7\textwidth]{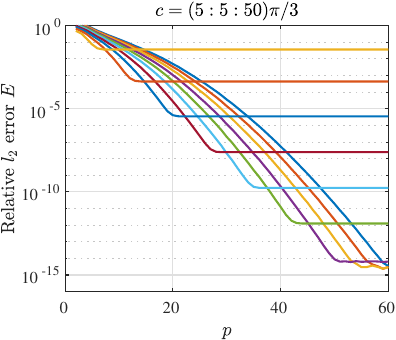}
    \caption{Error curve for $c$ held fixed with $p$ varying.}
    \label{fig:stokeslet_param_sweep_p}
  \end{subfigure}
  \caption{Parameter sweeps for the Stokeslet. In
    (\subref{fig:stokeslet_param_sweep_c}), the error decays with $c$
    until interpolation errors start dominating due to $p$ being too
    small. In (\subref{fig:stokeslet_param_sweep_p}), the error decays
    with $p$ until it hits the floor where the interpolation is fully
    resolved and the error equals the truncation error set by $c$. 
    }
  \label{fig:stokeslet_param_sweep}
\end{figure}

In order to find parameters that satisfy any given tolerance
$\epsilon$, we extract from our data $(c,p,E)$ triplets where $c$ and
$E$ are given by the lowest attainable error for a given $p$.  By
linear regression we can then fit straight lines to the pairs
$(c, \log_{10} E)$ and $(p, c)$, which ultimately allow us to find our
desired parameters by letting $E=\epsilon$. This is used in our
implementation for automatic parameter selection.

\begin{table}[ht]
  \centering
  \textbf{Stokeslet}\vspace{3pt}
  \begin{tabularx}{\textwidth}{Y|YYYY|YYYY}
\hline
$\epsilon$ & $\frac{3}{\pi}c$ & $p$ & $N_1$ & $N_{\rm{per}}$ & $\frac{6}{\pi}\sigma^{-2}$ & $p^{\rm G}$ &  $N_1^{\rm G}$ & $N_{\rm{per}}^{\rm G}$ \\
\hline
$10^{ -3}$ &  10 &  12 &  19 &   3 &  14 &  14 &   27 &   5 \\
$10^{ -6}$ &  17 &  23 &  33 &   5 &  25 &  28 &   49 &   9 \\
$10^{ -9}$ &  25 &  33 &  49 &   9 &  38 &  43 &   75 &  13 \\
$10^{-12}$ &  31 &  44 &  61 &  11 &  50 &  58 &   99 &  17 \\

\hline
\end{tabularx}
  \\
  \vspace{1em}
  \textbf{Stresslet}\vspace{3pt}
  \begin{tabularx}{\textwidth}{Y|YYYY|YYYY}
\hline
$\epsilon$ & $\frac{3}{\pi}c$ & $p$ & $N_1$ & $N_{\rm{per}}$ & $\frac{6}{\pi}\sigma^{-2}$ & $p^{\rm G}$ &  $N_1^{\rm G}$ & $N_{\rm{per}}^{\rm G}$ \\
\hline
$10^{ -3}$ &   9 &  11 &  17 &   3 &  11 &  12 &   21 &   3 \\
$10^{ -6}$ &  17 &  22 &  33 &   5 &  26 &  28 &   51 &   9 \\
$10^{ -9}$ &  25 &  33 &  49 &   9 &  40 &  44 &   79 &  13 \\
$10^{-12}$ &  32 &  44 &  63 &  11 &  54 &  60 &  107 &  17 \\

\hline
\end{tabularx}
  \\
  \vspace{1em}
  \textbf{Rotlet}\vspace{3pt}
\begin{tabularx}{\textwidth}{Y|YYYY|YYYY}
\hline
$\epsilon$ & $\frac{3}{\pi}c$ & $p$ & $N_1$ & $N_{\rm{per}}$ & $\frac{6}{\pi}\sigma^{-2}$ & $p^{\rm G}$ &  $N_1^{\rm G}$ & $N_{\rm{per}}^{\rm G}$ \\
\hline
$10^{ -3}$ &   7 &   8 &  13 &   3 &   9 &   9 &   17 &   3 \\
$10^{ -6}$ &  14 &  18 &  27 &   5 &  21 &  23 &   41 &   7 \\
$10^{ -9}$ &  20 &  28 &  39 &   7 &  34 &  38 &   67 &  11 \\
$10^{-12}$ &  27 &  39 &  53 &   9 &  47 &  53 &   93 &  15 \\

\hline
\end{tabularx}
  \caption{Parameters used for achieving a relative error $\epsilon$
    in DMK for the Stokes kernels. The left values are for a PSWF
    split with band limit $c$, while the right values with superscript
    $G$ are for a Gaussian split with shape parameter $\sigma$. The
    value $p$ is the polynomial approximation order, $N_1$ is the
    total number of Fourier modes in each dimension, and
    $N_{\rm{per}}$ is the number of modes in each direction at the top
    level when the sum is periodic.}
  \label{tab:params}
\end{table}

\section{Numerical results} \label{sec:results}

We have implemented the DMK algorithm for the Stokes kernels, and
the code was compiled using the Intel compiler and linked with the
Intel MKL library, with all experiments run in single-threaded mode on a 3.30GHz
Intel(R) Xeon(R) Gold 6234 CPU. It is available as open source software \cite{DMKcode}.
We report timings for the fully adaptive case alone, omitting 
the fast Ewald version (which is generally faster than DMK
when the particles are uniformly distributed).
Since the performance of the periodic and free-space codes are similar, we only report
on the free-space case. A MATLAB version is also available \cite{DMK_matlab}, 
to facilitate experimentation and further algorithmic development. It
makes use of the excellent Chebfun package \cite{chebfun}
for most steps involving polynomial expansions.

In our implementation, we set the subdivision
parameter $n_s$, the maximum number of points in a leaf node, to
$120$, $240$, $360$, $480$ for error tolerance
$\eps = 10^{-3}, 10^{-6}, 10^{-9}, 10^{-12}$ for two dimensions, and
to $600$, $1200$, $2000$, $3000$ for three dimensions.
Figure~\ref{fig:linearscaling} shows the total runtime (in seconds)
for points distributed on a perturbed circle in two dimensions and on a perturbed
sphere in three dimensions, with the total number of source and additional
target points given by
\( N_S = N_T = \num{100000},\ \num{250000},\ \num{500000},\ \num{1000000},\ \num{2000000},\ \num{4000000} \). The number of levels in the adaptive octree ranges from $5$ to $11$
in three dimensions.

\begin{figure}[ht]
  \centering
  \includegraphics[height=48mm]{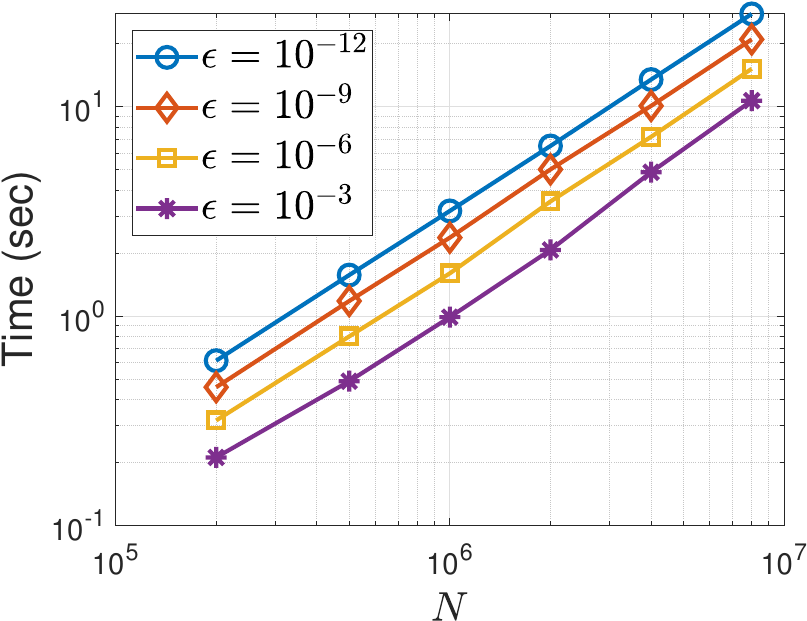}
  \includegraphics[height=48mm]{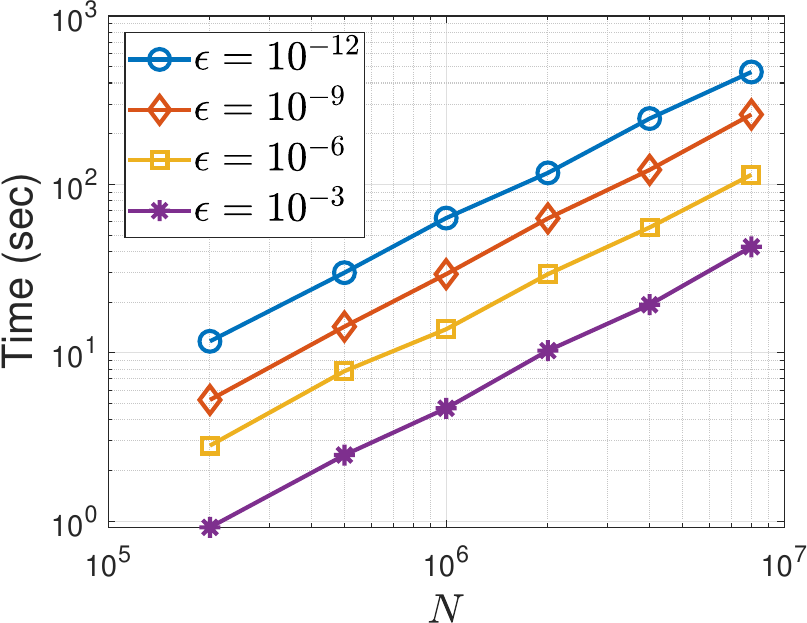}

  \vspace{4mm}
  
  \includegraphics[height=48mm]{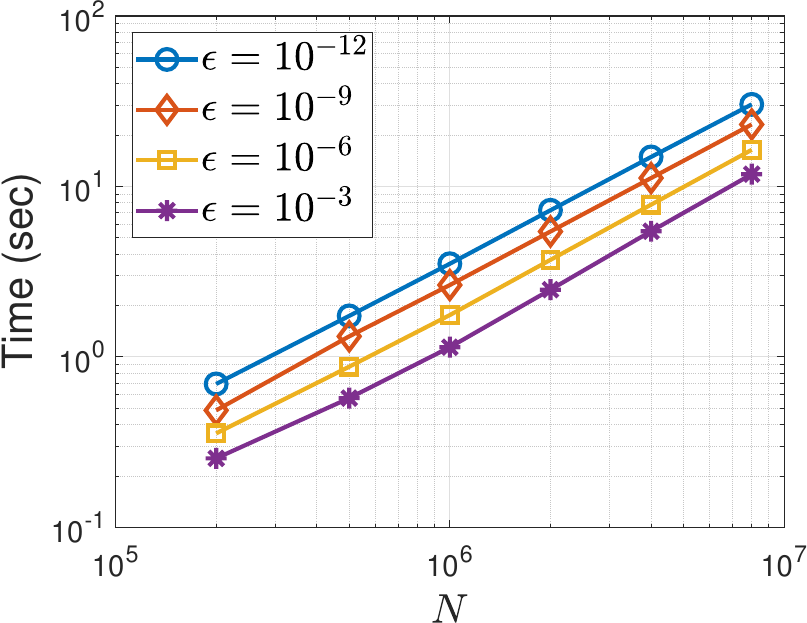}
  \includegraphics[height=48mm]{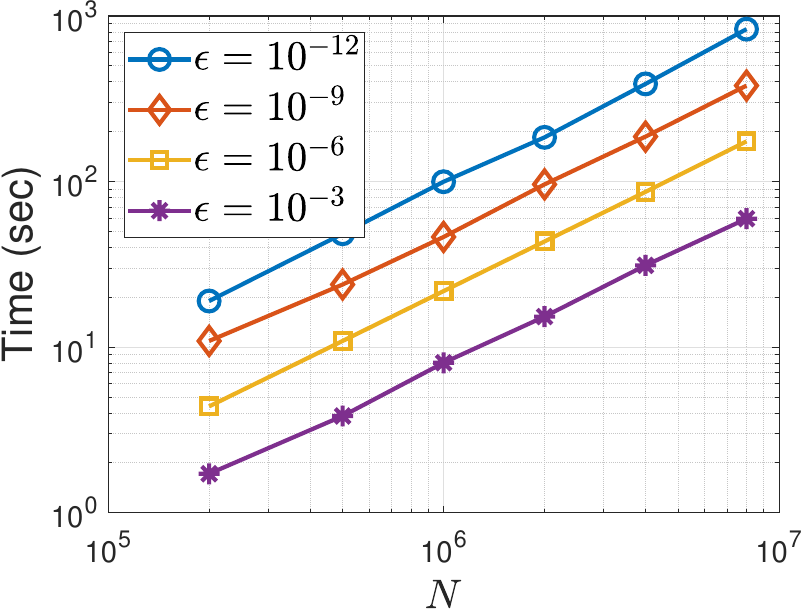}

  \caption{Linear scaling of DMK for the Stokeslet and stresslet in two and
    three dimensions. Top: Stokeslet; bottom: stresslet. Left: 2D; right: 3D.}
  \label{fig:linearscaling}
\end{figure}

In Figure~\ref{fig:throughput}, we show the average throughput of DMK
and FMM. We use \texttt{fmm2d} from \cite{fmm2dlib} and \texttt{fmm3d}
from \cite{fmm3dlib}. The Stokes FMM implemented in \cite{fmm3dlib}
uses the algorithm in \cite{Tornberg2008} which in turn calls the harmonic
FMM in \cite{cheng1999jcp}. Both DMK and FMM3D use almost the same subroutines
for tree construction and the same fast SIMD inverse square-root evaluator
from the \texttt{SCTL} library~\cite{sctl}. Thus, the throughput shown here
are good indicators of the performance of these two algorithms. The top right figure also shows the throughput
for 3D Stokeslet of \texttt{pvfmm}~\cite{pvfmmlib} which implements the 
kernel-independent FMM in \cite{Ying2004} with nearly optimal implementation. The performance of DMK is again very close to that of \texttt{pvfmm}, as for the Laplace kernel reported in~\cite{jiang2025cpam}. 

\begin{figure}[ht]
  \centering
  \includegraphics[height=48mm]{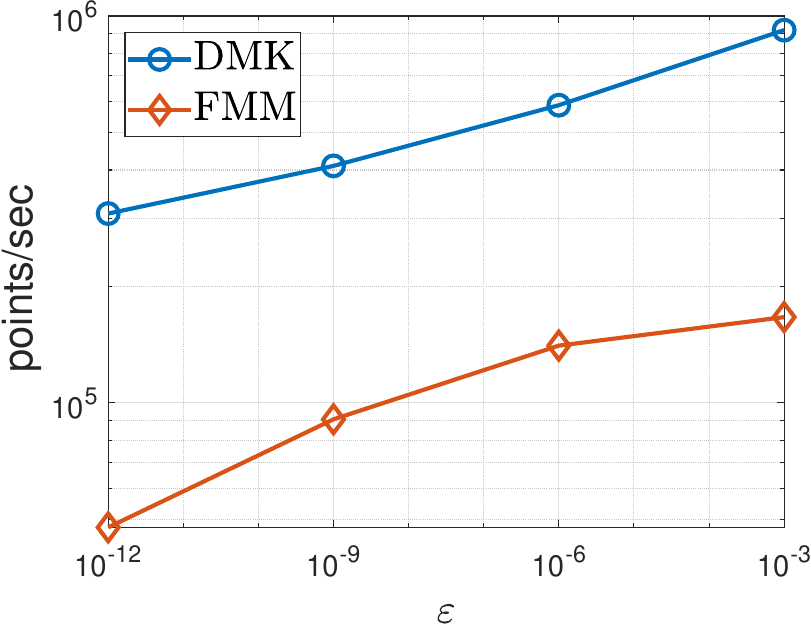}
  \includegraphics[height=48mm]{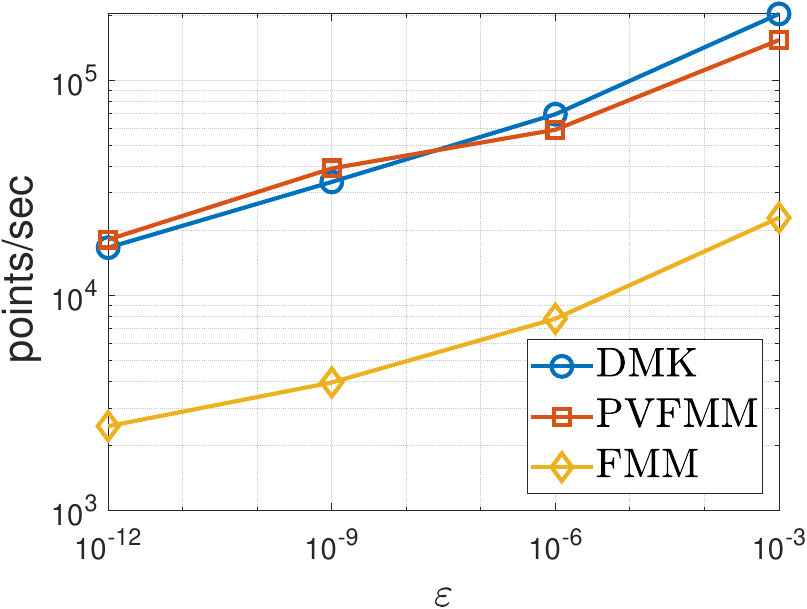}
  
  \vspace{4mm}
  
  \includegraphics[height=48mm]{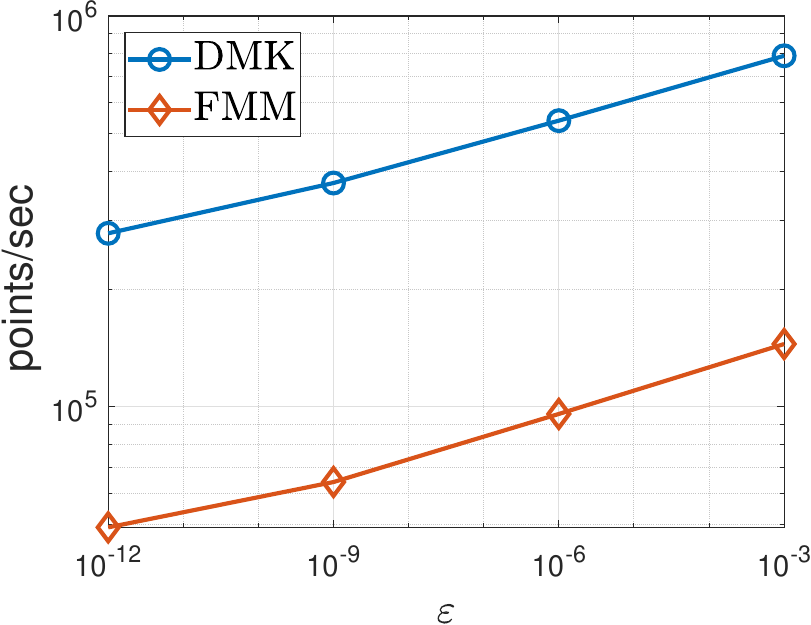}
  \includegraphics[height=48mm]{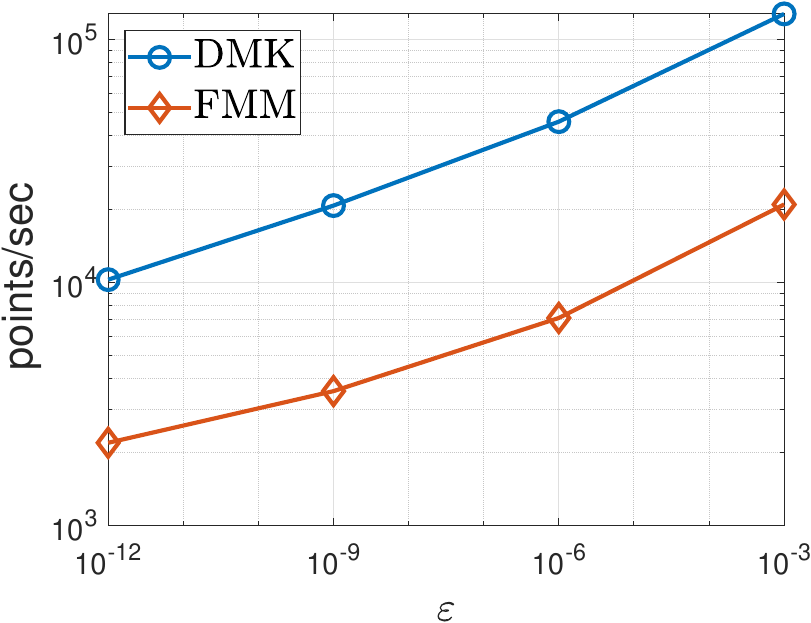}

  \caption{Average throughput of DMK and FMM for the Stokeslet and stresslet in two and
    three dimensions. Top: Stokeslet; bottom: stresslet. Left: 2D; right: 3D.}
  \label{fig:throughput}
\end{figure}

\section{Conclusions} \label{sec:conclusions}
We have created a framework for constructing a splitting of the biharmonic kernel for any even, normalized, window function. With a Gaussian window, the classical decomposition first derived by Hasimoto is obtained. A new and more efficient kernel split is obtained, however, using the prolate spheroidal wave function (PSWF) of order zero instead. More efficient here means that, for the same spatial support of the resulting residual kernel, fewer Fourier modes are needed to resolve the mollified kernel to the desired accuracy. 

The kernel splitting is carried out in Fourier space and the prolate kernel can be used to accelerate fast Ewald summation or the DMK framework for any kernel derived from the biharmonic Green's function in any dimension. This includes Stokeslet, stresslet, and elastic kernels, as well as other scalar kernels encountered in solving boundary value problems for the biharmonic equation. 

In the DMK framework, the two most important parameters affecting the computational cost are the number of Fourier modes ($N_1^d$) and Chebyshev points ($p^d$) needed for a given accuracy; they are the same for each level in the tree. Numerical results show a significant reduction of these numbers when switching from Gaussians to prolates. The performance of the DMK method with prolates compares favorably with several distinct implementations of the fast multipole method (FMM).

We have also described the implementation of periodic boundary conditions on a cube in the DMK framework, with run times comparable to the free space case. 
The extension to an arbitrary rectangular box with any periodicity (e.g., 1-periodic, 2-periodic, and fully periodic in three dimensions) is currently underway. 

Finally, though prolates are advantageous for convolution with point sources, kernel splitting based on Gaussians may be preferable when dealing with continuous sources, because the separability of the Gaussian can be used to accelerate the finest level contributions of the singular residual kernel (see, for example, \cite{greengard2024sirev,jiang2025cpam}).

\section*{Acknowledgements}
Tornberg acknowledges the support from the Swedish Research Council under grant no 2023-04269.

\bibliographystyle{abbrvnat_mod}
\bibliography{dmk_stokes} 

\appendix
\begin{appendices}
\section{Derivations of residual Stokes kernels}
\label{sec:stokes-split-deriv}

Let $D_S$ and $D_T$ denote the differential operators in defining the
Stokeslet~\eqref{eq:stokeslet_def_deriv} and the
stresslet~\eqref{eq:stresslet_def_deriv},
\begin{align}
  D_S &= -\delta_{jl}\nabla^2 + \nabla_j\nabla_l, \label{eq:stoleslet_op}\\
  D_T &=  -\left(
        \delta_{jl}\nabla_m+\delta_{lm}\nabla_j+\delta_{mj}\nabla_l
        \right) \nabla^2 + 2\nabla_j\nabla_l\nabla_m. \label{eq:stresslet_op}
\end{align}
For any radially symmetric function $f(r)$, application of $D_S$ and $D_T$ 
  leads to 
\be
    D_S[f](\vx) 
    =  -\left( (d-2)f'(r) + r f''(r)\right) \frac{\delta_{jl}}{r}
       + \left( - f'(r) + r f''(r) \right)  \frac{x_jx_l}{r^3} .
       \label{eq:dsf}
\ee
and
\be
\ba
  D_T[f](\vx) = &\frac{\delta_{jl}x_m + \delta_{lm}x_j + \delta_{mj}x_l}{r^3}
  \del{ (d-3)\del{f'(r)-r f''(r)} - r^2 f'''(r)} \\
  &- 6\frac{x_jx_lx_m}{r^5} 
   \del{ -f'(r) + r f''(r) - \tfrac{1}{3} r^2  f'''(r)} .
\ea
  \label{eq:dtf}
\ee

Given a general kernel split of the biharmonic in $\R^d$, $d=2,3$, the
residual Stokeslet $S_R$ and the residual stresslet $T_R$ can be
derived by application of $D_S$ and $D_T$ to the residual biharmonic
$B_R(r)$. Rescaling to match the factors of $S$ and $T$ when $d=3$, we define $S_R$ and $T_R$ as

\begin{align}
    S_{R,jl}(\vx)
    &=   \RSdiag(r) \frac{\delta_{jl}}{8\pi r}
      +   \RSoffd(r)  \frac{x_jx_l}{8\pi r^3},\\
  T_{R,jlm}(\vx) &= \frac{\delta_{jl}x_m + \delta_{lm}x_j + \delta_{mj}x_l}{8\pi r^3} \, \RTdiag(r)
  - \frac{3}{4\pi}\frac{x_jx_lx_m}{r^5} \, \RToffd(r),
\end{align}
where
\begin{align}
  \RSdiag(r) &= -8\pi\left( (d-2)\Bres'(r) + r\Bres''(r) \right), \\
  \RSoffd(r) &= 8\pi\left( -\Bres'(r) + r\Bres''(r) \right), \\
  \RTdiag(r) &= 8\pi\del{ (d-3)\del{\Bres'(r)-r\Bres''(r)} -r^2\Bres'''(r)}, \\
  \RToffd(r) &= 8\pi \del{ -\Bres'(r) + r\Bres''(r) - \tfrac{1}{3} r^2 \Bres'''(r)} .
\end{align}
Similarly, the residual rotlet is derived from
  \eqref{eq:rotlet_def_deriv} by differentiating a split of the
  harmonic kernel,
\begin{align}
    \Omega_{R,jl} = \frac{1}{8\pi}\epsilon_{jlm}\frac{x_m}{r^3}
    \underbrace{\del{-4 \pi r^2 H_R'(r)}}_{\ROoffd(r)}
\end{align}
Using the residual biharmonic kernel \eqref{eq:BR_gen} and the residual harmonic kernel \eqref{eq:harmonic_split}, the above expressions generate the residual kernels of \cref{sec:stokes-split}.

\subsection{Self-interaction}
\label{sec:self-inter-deriv}

We here derive the self-interaction terms of the 3D Stokes kernels of \cref{sec:stokes-split}. For the Stokeslet,
\be
\ba
    S_{M,jl}(0) &= \lim_{r\to 0} \sbr{
       \del{1-\RSdiag(r)} \frac{\delta_{jl}}{8\pi r}
     + \del{1-\RSoffd(r)} \frac{\hat x_j \hat x_l}{8\pi r}
     },
     \\
     &=
     \lim_{r\to 0} \left[
       \del{\frac{1}{r}\int_0^r \wfunc(s) \dif s + \wfunc(r)} \frac{\delta_{jl}}{4\pi}
       +
       \del{\frac{1}{r}\int_0^r \wfunc(s) \dif s - \wfunc(r)} \frac{\hat x_j \hat x_l}{4\pi}
       \right]
  \\
  &= \frac{1}{2\pi} \wfunc(0) \delta_{jl},
\ea
\ee
where we have used l'H\^{o}pital's rule and the convenience notation $\hat x_j=x_j/r$. From this, we obtain \eqref{eq:S_self}.

For the stresslet, we need to determine  
\begin{align}
    T_{M,jlm}(0)
    = \lim_{r \to0} \Bigg[\frac{1}{8\pi}
        \del{\delta_{jl}\hat x_m + \delta_{lm}\hat x_j + \delta_{mj}\hat x_l} \frac{-\RTdiag(r)}{r^2}
        - \frac{3}{4\pi}\hat x_j \hat x_l \hat x_m
          \frac{1-\RToffd(r)}{r^2}
             \Bigg]
             .
\end{align}
Evaluating the limits, again using l'H\^{o}pital's rule, we get
\begin{align}
  \lim_{r \to 0} \frac{ -\RTdiag(r) }{r^2}
  &= \lim_{r \to 0} 2 \wfunc'(r) = 0,  \\
  \begin{split}
    \lim_{r\to 0} \frac{1-\RToffd(r)}{r^2}
    &=\lim_{r \to 0} \del{
      \frac{ 2\int_0^r \wfunc(t) \dif t - 2 r \wfunc(r)}{r^2}
      + \tfrac{2}{3} \wfunc'(r)
      } \\
    &= -\wfunc'(0) + \tfrac{2}{3} \wfunc'(0)
      = 0.
  \end{split}
  \label{eq:MToffd_limit}
\end{align}
From this we find that the stresslet self-interaction is zero.

The self-interaction term for the rotlet is also zero, as 
\begin{align}
  \Omega_{M,jl}(0) =
  \lim_{r\to 0} \del{\epsilon_{jlm}\hat x_m \frac{1-\ROoffd(r)}{8\pi r^2}  } = 0 .
\end{align}
This follows from a limit similar to that in \eqref{eq:MToffd_limit},
\begin{align}
  \lim_{r\to 0}  \frac{1-\ROoffd(r)}{r^2} 
  = \lim_{r\to 0} \frac{ 2\int_0^r \wfunc(t) \dif t - 2 r \wfunc(r)}{r^2}
  = -\wfunc'(0) = 0.
\end{align}

  \section{Numerical evaluation of Stokeslet and stresslet residual kernels}
  \label{app:num_fourier_split}

  For a given biharmonic split, with the mollified kernel defined in
  Fourier space as $\Bmollhat(k) = \widehat B(k) \widehat \gamma(k)$,
  the residual kernel $\Bres(r)$ may not be explicitly available. It
  can then be computed, and subsequently differentiated, numerically
  without loss of accuracy or efficiency. The procedure for this,
  outlined below, was first introduced in \cite{jiang2025cpam}.

  The task is to compute the smooth mollified kernel $S_{M,jl}$ for
  the Stokeslet and $T_{M,jlm}$ for the stresslet, from which the
  residual kernels follow as $S_{R,jl}(r) = S_{jl}(r) - S_{M,jl}(r)$
  and $T_{R,jlm}(r) = T_{jlm}(r) - T_{M,jlm}(r)$.  It is clear that
  the mollified kernels of Stokeslet and stresslet can be obtained by
  applying the differential operators $D_S$ \eqref{eq:stoleslet_op}
  and $D_T$ \eqref{eq:stresslet_op} to the biharmonic mollified kernel
  $\Bmoll(r)$.  Although we know the Fourier transform $\Bmollhat$,
  the singularity in $\widehat B(k)$ at $k=0$ prevents us from
  computing $\Bmoll$ directly through an inverse Fourier
  transform. Instead, we first replace $\widehat B$ with the windowed
  biharmonic \eqref{eq:Btrunc_hat} truncated outside a window radius
  $\truncrad$, and work with the truncated mollified biharmonic
  $\Bmollhat^\truncrad$, defined in \eqref{eq:Bmollhat_trunc}.
  
Recall now that the radial Fourier transform kernel is given by 
\begin{align}
    F_k(r) = \begin{cases}
        J_0(kr), & d=2, \\
        \frac{\sin(kr)}{kr}, &d=3
    \end{cases}
\end{align}
and the radial inverse Fourier transform is
given by 
the formula 
\be
f(r) = \frac{1}{2\pi^{d-1}}\int_0^\infty F_k(r)\widehat{f}(k)k^{d-1}dk.
\ee
Thus,
\be
\Bmoll^\truncrad(r) = \frac{1}{2\pi^{d-1}}\int_0^\infty F_k(r)\Bmollhat^\truncrad(k)k^{d-1}dk
=\frac{1}{2\pi^{d-1}}\int_0^\infty F_k(r)\widehat{B}^\truncrad(k)\wfunhat(k)k^{d-1}dk
\ee
It is easy to see that the differentiation operators in \eqref{eq:stokeslet_def_deriv}
and \eqref{eq:stresslet_def_deriv}
can be passed to the radial Fourier transform kernel $F_k(r)$, and we will use this fact to compute the mollified kernel of the Stokeslet and stresslet, that we write on the forms: 
\be
S_{M,jl}(\vx) 
=  \MSdiag(r)  \frac{\delta_{jl}}{8\pi r}
+  \MSoffd(r)  \frac{x_jx_l}{8\pi r^3} ,
\label{eq:win_stokeslet_detail}
\ee
\be
T_{M,jlm}(\vx) = \frac{\delta_{jl}x_m + \delta_{lm}x_j + \delta_{mj}x_l}{8\pi r^3}
\MTdiag(r)
- \frac{3}{4\pi} \frac{x_jx_lx_m}{r^5} 
\MToffd(r) ,
\label{eq:win_stresslet_detail}
\ee
The Stokeslet and stresslet are obtained by differentiating $B(r)$ in \eqref{eq:biharmonic_green_def}, rather than $B^\truncrad(r)$ in \eqref{eq:Btrunc}; the same applies to the associated mollified kernel.
We have
\be
\ba
\MSdiag(r) &=
\MSdiag^\truncrad(r) - 8\pi\left[
(d-2)(B(r)-B^\truncrad(r))' + r (B(r)-B^\truncrad(r))''\right]\\
&=
\begin{cases}
\MSdiag^\truncrad(r)
+2 r \left(1-\log \truncrad\right)   , & d=2, \\      \MSdiag^\truncrad(r)+\frac{2 r}{\truncrad} , &d=3. 
\end{cases}
\ea
\label{eq:msdiag}
\ee
For the other three kernels, the action of the associated differential operators on 
$B(r)-B^\truncrad(r)$
is identically zero. 
Hence, 
$\MSoffd(r)=\MSoffd^\truncrad(r)$, 
$\MTdiag(r)=\MTdiag^\truncrad(r)$, 
and $\MToffd(r)=\MToffd^\truncrad(r)$, where
\be
\ba
\MSdiag^\truncrad(r) &= \frac{1}{2\pi^{d-1}}\int_0^\infty \FSdiag(k,r)\widehat{B}^\truncrad(k)\wfunhat(k)k^{d-1}dk, \\
\MSoffd^\truncrad(r) &= \frac{1}{2\pi^{d-1}}\int_0^\infty \FSoffd(k,r)\widehat{B}^\truncrad(k)\wfunhat(k)k^{d-1}dk,\\
\MTdiag^\truncrad(r) &= \frac{1}{2\pi^{d-1}}\int_0^\infty \FTdiag(k,r)\widehat{B}^\truncrad(k)\wfunhat(k)k^{d-1}dk,\\
\MToffd^\truncrad(r) &= \frac{1}{2\pi^{d-1}}\int_0^\infty \FToffd(k,r)\widehat{B}^\truncrad(k)\wfunhat(k)k^{d-1}dk,
\ea
\label{eq:STterms}
\ee
with
\begin{align}
  \begin{split}
    \FSdiag(r) &= -8\pi\left( (d-2)F_k'(r) + rF_k''(r) \right), \\
    \FSoffd(r) &= 8\pi\left( -F_k'(r) + rF_k''(r)  \right), \\
    \FTdiag(r) &= 8\pi\del{ (d-3)\del{F_k'(r)-rF_k''(r)} -r^2F_k'''(r)}, \\
    \FToffd(r) &= 8\pi \del{ -F_k'(r) + rF_k''(r) - \tfrac{1}{3} r^2 F_k'''(r)} .
  \end{split}
  \label{eq:dFterms}
\end{align}
The integrands in \eqref{eq:STterms} are all smooth because $F_k(r)$ is a smooth 
even function and the truncated biharmonic kernel $\widehat{B}^\truncrad(k)$ is smooth. Moreover, the biharmonic mollifier $\wfunhat(k)$ goes to zero rapidly. The integrals can therefore be truncated and accurately evaluated using the Gauss-Legendre quadrature for
any given values of $r$ and $\truncrad$, with the latter chosen
according to our range of interest. 

\begin{remark}
The functions $\MSdiag(r)$,
$\MSoffd(r)$, $\MTdiag(r)$, and $\MToffd(r)$ are smooth and can be
accurately represented using polynomial approximations created in
a precomputation step. This means that there is negligible
additional cost associated with having a numerically computed
residual kernel.
\end{remark}

\subsection{Self-interaction}

We now calculate self-interaction terms in the same way as in \cref{sec:self-inter-deriv}.
Both the differentiation operators in \eqref{eq:stokeslet_def_deriv}
and \eqref{eq:stresslet_def_deriv}
and the limit operation
can be passed to the radial Fourier transform kernel $F_k(r)$. For two and three dimensions, straightforward
calculation using \eqref{eq:dFterms} leads to
\be
\lim_{r\to 0} \frac{\FSdiag(r)}{r}
=8\pi\frac{d-1}{d}k^2,\quad
\lim_{r\to 0} \frac{\FSoffd(r)}{r}
 = 0,\quad
\lim_{r\to 0} \frac{\FTdiag(r)}{r^2}
= 0,\quad
\lim_{r\to 0} \frac{\FToffd(r)}{r^2}
 = 0.
\ee
Thus, the self-interaction terms connected to
$\MSoffd$, $\MTdiag$, $\MToffd$
are all equal to zero, with the only nonzero
self-interaction coming from
\be
\lim_{r\to 0}\frac{\MSdiag^\truncrad(r)}{r}=\frac{1}{2\pi^{d-1}}\int_0^\infty \left( \lim_{r\to 0} \frac{\FSdiag(r)}{r}\right) \Bmollhat^\truncrad(k)k^{d-1}dk
=\frac{4(d-1)}{d\pi^{d-2}}\int_0^\infty \Bmollhat^\truncrad(k)k^{d+1}dk,
\label{eq:msdiag0}
\ee
which can be calculated numerically to high accuracy via the Gauss-Legendre quadrature.
Finally, the Stokeslet self-interaction is obtained by 
combining \eqref{eq:msdiag} and \eqref{eq:msdiag0}, and writing
\begin{align}
  S_{M,jl}(0) = \frac{\delta_{jl}}{8\pi} \lim_{r\to 0} \frac{\MSdiag(r)}{r}
\end{align}

Note that we can use $\Bmollhat(k)$
instead of $\Bmollhat^\truncrad(k)$ in \eqref{eq:msdiag0} to obtain $\MSdiag(0)$ directly.  
The resulting expression is
\be
\lim_{r\to 0} \frac{\MSdiag(r)}{r}
=\frac{4(d-1)}{d\pi^{d-2}}\int_0^\infty \Bmollhat(k)k^{d+1}dk
=
\frac{4(d-1)}{d\pi^{d-2}}\int_0^\infty \wfunhat(k)k^{d-3}dk.
\ee
When $d=3$, combining the above equation
with \eqref{eq:gammaB_in_phi}, we 
obtain an explicit expression
for $\MSdiag(0)$:
\be
\ba
\lim_{r \to 0} \frac{\MSdiag(r)}{r} &= 
\frac{8}{3\pi}\int_0^\infty \wfunhat(k)dk
=\frac{4}{3\pi}\int_{-\infty}^\infty \wfunhat(k)dk 
= \frac{8}{3} \wfun(0).
\ea \ee
With our biharmonic mollifier \eqref{eq:biharmonic_screen_R},
this recovers the self-interaction \eqref{eq:S_self}.

For $d=2$, we will need to deal with singular integrals due to the singularity 
at the origin, and it is better to use
\eqref{eq:msdiag0}.

\end{appendices}

\end{document}